\newtheorem{theorem}{Theorem}[section]
\newtheorem{defn}[theorem]{Definition}
\newtheorem{thm}[theorem]{Theorem}
\newtheorem{prop}[theorem]{Proposition}
\newtheorem{cor}[theorem]{Corollary}
\newtheorem{example}[theorem]{Example}
\newtheorem{lem}[theorem]{Lemma}
\newtheorem{rem}[theorem]{Remark}
\newenvironment{proof}[1][Proof]{\noindent\textbf{#1.} }{\ \rule{0.5em}{0.5em}}
\def\RR{\mathbb{R}}
\def\EE{\mathbb{E}}
\def\cF{{\cal F}}
\def\be{{\beta}}
\def\de{{\delta}}
\def\al{{\alpha}}
\def\be{{\beta}}
\def\de{{\delta}}
\def \eref#1{\hbox{(\ref{#1})}}
\global\long\def\ee#1{e^{-\frac{\delta^2}{2#1}}}
\begin{document}

\title{A multiparameter Garsia-Rodemich-Rumsey inequality and some
applications}

\author{     Yaozhong {\sc Hu}\thanks{Y.  Hu is
partially supported by a grant from the Simons Foundation
\#209206.
\newline
{Key words}:  Joint H\"older continuity;  Garsia-Rodemich-Rumsey inequality;
 sample path property;  Gaussian processes,  fractional Brownian fields; stochastic heat
equations with additive noises.
\newline
 {AMS subject classification (2010)}: 60G17,     26A16,  60G60,  60G15, 60H15.
 }
\ \    and \  \       Khoa   {\sc L\^e}  \\
Department of Mathematics\thinspace ,\ University of Kansas\\
405 Snow Hall\thinspace ,\ Lawrence, Kansas 66045-2142\\
}
\date{}
\maketitle

\begin{abstract}
We extend the classical Garsia-Rodemich-Rumsey inequality
to the multiparameter situation. The new inequality is applied
to obtain some joint H\"older continuity along the rectangles
for  fractional Brownian
fields $W(t, x)$ and for  the solution $u(t, y)$ of stochastic heat equation with additive white noise.
%\[
% \left| W(y_1, y_2) -W(x_1, y_2)-W(x_2, y_1)+W(x_1, x_2)\right|
%\le L_{H_1, H_2} |y_1-x_1|^{ H_1} |y_2-x_2|^{H_2}
%\sqrt{\left| \log \left(|y_1-x_1| |y_2-x_2| \right)\right| }
%\]
% and
%\[
%|u(t,y)-u(t,x)-u(s,y)+u(s,x)|\le C_\alpha  |t-s|^{1/4-\alpha} |x-y|^{2\alpha}
%\sqrt{\left|\log\left( |t-s||x-y\right)  \right|  }
%\]
%for any $\al\in [0, 1/4)$.
\end{abstract}

\section{Introduction}

Let  the function $\Psi: [0, \infty)\rightarrow [0, \infty)$ be
non decreasing  with  $\displaystyle \lim_{u\to\infty}\Psi(u)=\infty$  and let
the function $p
:[0, 1]\rightarrow [0, 1]$  be
continuous and non decreasing     with  $p(0)=0$.    Set
\[
\left\{
\begin{array}{ll}
\Psi^{-1}(u)=\sup_{\Psi(v)\le u}v &\qquad     \hbox{if \ $ \Psi(0)\le u<\infty $}\\ \\
p^{-1}(u)=\max_{p(v)\le u}v  &\qquad   \hbox{if \  $0\le u\le p(1)$}
\end{array}\right.
\]
The celebrated   Garsia-Rodemich-Rumsey inequality
\cite{garsiarodemich} takes the following form:
\begin{lem}
\label{lem:grr}  Let $f$ be a continuous function on $[0,1]$ and
suppose that
\[
\int_{0}^{1}\int_{0}^{1}\Psi\left(\frac{|f(x)-f(y)|}{p(x-y)}\right)dxdy\le B<\infty\,.
\]
Then for all $s,t\in[0,1]$ we have
\begin{equation}
|f(s)-f(t)|\le8\int_{0}^{|s-t|}\Psi^{-1}\left(\frac{4B}{u^{2}}\right)dp(u).\label{ineq:GRR}
\end{equation}
\end{lem}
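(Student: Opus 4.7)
I would follow the classical chaining/iteration argument behind Garsia-Rodemich-Rumsey. Introduce
$$I(x) = \int_0^1 \Psi\!\left(\frac{|f(x)-f(y)|}{p(|x-y|)}\right)\,dy,$$
so that by Fubini $\int_0^1 I(x)\,dx\le B$. Fix $0\le s<t\le 1$, set $d_0 = t-s$, and choose the geometric scale $d_n = p^{-1}(p(d_0)/2^n)$, so that $p(d_n) = 2^{-n}p(d_0)\downarrow 0$.

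The heart of the argument is an inductive construction of a sequence $t_0, t_1, \ldots$ along which $f(t_n)\to f(s)$. Pick $t_0\in[t-d_1,t]$ with $I(t_0)\le 2B/d_1$, which is possible by averaging $\int I\le B$; then, given $t_n$, look inside the interval $J_{n+1}$ of length $d_{n+1}$ around $t_n$ and select $t_{n+1}\in J_{n+1}$ satisfying both
$$I(t_{n+1})\le 2B/d_{n+1} \quad\text{and}\quad \Psi\!\left(\frac{|f(t_{n+1})-f(t_n)|}{p(|t_{n+1}-t_n|)}\right)\le 2I(t_n)/d_{n+1}.$$
Such a $t_{n+1}$ exists by a two-set Chebyshev/pigeonhole argument: the subset of $J_{n+1}$ where the first inequality fails has measure less than $|J_{n+1}|/2$ by $\int I\le B$, and the subset where the second fails has measure less than $|J_{n+1}|/2$ by the definition of $I(t_n)$. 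An induction on $n$ then gives $I(t_n)\le 2B/d_n$, and combining with the second inequality yields
$$|f(t_{n+1})-f(t_n)|\le p(d_n)\,\Psi^{-1}\!\bigl(4B/(d_n d_{n+1})\bigr).$$

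Because $p$ is continuous with $p(0)=0$ and $d_n\to 0$, the two-sided chain (a symmetric construction from the left endpoint $s$ is done in parallel) has the chains meeting at a common limit point; continuity of $f$ then justifies the telescoping identity $f(t)-f(s) = \sum_n (f(t_n)-f(t_{n+1}))$. Summing the above bound and comparing the resulting series with the integral $\int_0^{|s-t|}\Psi^{-1}(4B/u^2)\,dp(u)$ (a Riemann-sum comparison exploiting that $u\mapsto\Psi^{-1}(4B/u^2)$ is non-increasing and that $p(d_n)-p(d_{n+1}) = p(d_0)/2^{n+1}$) produces \eqref{ineq:GRR} with constant $8$.

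The main technical obstacles I foresee are (i) ensuring that the interval $J_{n+1}$ always has Lebesgue measure comparable to $d_{n+1}$ despite the possible proximity to the endpoints $0$ and $1$, which is handled by shifting the starting points $t_0$ and its analogue near $s$ slightly inside $[0,1]$; and (ii) the explicit bookkeeping of constants that turns the dyadic sum into the Stieltjes integral with the exact numeric constant $8$ on the right of \eqref{ineq:GRR}. The inventive step is the pigeonhole-plus-chain construction; what follows is essentially calculation.
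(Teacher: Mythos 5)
Your overall strategy (Chebyshev/pigeonhole selection of good points plus a chaining/telescoping argument) is indeed the mechanism behind the classical proof, which the paper encapsulates in Lemma \ref{lem:tk} and the $n=1$ base of Theorem \ref{lem:grrn}. But the specific chain you build does not close up, and this is not a bookkeeping issue. In the classical construction the scales are defined recursively from the chosen points, $d_{k-1}=p^{-1}\bigl(\tfrac12 p(t_{k-1})\bigr)$, and the next point is selected in the interval $(0,d_{k-1})$ \emph{anchored at the target endpoint}, so the chain is forced to converge to that endpoint and the telescoping sum identifies its limit with $f(0)$ (then a reflection gives the bound to the other endpoint through the single common point $t_0$ with $I(t_0)\le B$, producing $4+4=8$, and general $s,t$ follow by restricting the double integral to $[s,t]^2$ and rescaling, exactly as in the paper's proof of Theorem \ref{lem:grrn}). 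You instead select $t_{n+1}$ in an interval of length $d_{n+1}$ \emph{centered at the previous point} $t_n$, with $d_n$ fixed in advance by $p(d_n)=2^{-n}p(d_0)$. Then the chain may simply wander: its total displacement is only bounded by $\sum_n d_n$, which need not be finite (take $p(u)=e^{-1/u}$, so $d_n\sim (n\log 2)^{-1}$), so a single chain need not converge at all; and even when both chains converge, there is no mechanism whatsoever forcing the chain started near $t$ and the chain started near $s$ to ``meet at a common limit point.'' Consequently the asserted identity $f(t)-f(s)=\sum_n\bigl(f(t_n)-f(t_{n+1})\bigr)$ is false for your construction: the telescoping only produces $f(t_0)-\lim_n f(t_n)$, and in addition the initial gaps $|f(t)-f(t_0)|$ and $|f(s)-f(s_0)|$ are never estimated (they cannot be, since $I(t)$ and $I(s)$ are not controlled).

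The repair is precisely the ingredient your plan is missing: use \emph{one} intermediate point $t_0$ (chosen by averaging so that $I(t_0)\le B$, or $\le B/(t-s)$ after restricting to $[s,t]$), and grow the two chains \emph{from} $t_0$ \emph{toward} the two endpoints, selecting the $n$-th point in a shrinking interval attached to the endpoint (e.g.\ $(t-d_n,t)$ and $(s,s+d_n)$, or $(0,d_{n-1})$ after the affine reduction to endpoints), with $d_n$ tied to $p(t_{n-1})$ as in \eqref{eq:tdk}. Then convergence of each chain to its endpoint is automatic, continuity of $f$ legitimizes the telescoping, the inequality $p(t_{k-1}-t_k)\le p(t_{k-1})=2p(d_{k-1})\le 4[p(d_{k-1})-p(d_k)]$ converts the series into the Stieltjes integral, and the two chains together give the constant $8$. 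Your boundary concern (i) also disappears in this formulation, since the selection intervals sit inside $[s,t]$ by construction. As written, however, the proposal has a genuine gap at its central step.
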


This Garsia-Rodemich-Rumsey  lemma  \ref{lem:grr}  is very powerful
in the study of the sample path  H\"older continuity of a stochastic process and
in   other occasions.
%and has been widely applied in literature
%to obtain sample path continuity for random processes.
For example if
 $\Psi(u)=|u|^{p}$ and $p(u)=|u|^{\alpha+1/p}$,  where $p\alpha>1$,
the inequality (\ref{ineq:GRR}) implies  the following Sobolev imbedding
inequality
\begin{equation}
|f(s)-f(t)|\le C_{\alpha,p}|t-s|^{\alpha-1/p}\left(\int_{0}^{1}\int_{0}^{1}
\frac{|f(x)-f(y)|^{p}}{|x-y|^{\alpha p+1}}dxdy\right)^{1/p}.\label{ineq:sobolev}
\end{equation}
The Garsia-Rodemich-Rumsey lemma has been extended to several
parameter or infinite many parameters. However the parameter space are
assumed to have a  distance (metric space)
and the Garsia-Rodemich-Rumsey lemma is
with respect to that  distance.  This method immediately yields the following result for a fractional Brownian field
$W^{H}( x)$ of Hurst parameter $H=( H_1, \cdots, H_d)$, then for any $\be_i$ with $\be_i<H_i$, $i=  1, \cdots,
d$, one has
\begin{equation}\label{e.wxy}
|W(y)-W( x)|\le L  \sum_{i=1}^d |y_i-x_i|^{\be_i} \,,
\end{equation}
where $L$ is an integrable  random variable.  One  can  improve this
result (Remark \ref{rem:1}) by  our version of multiparameter Garsia-Rodemich-Rumsey inequality. We do not seek for a suitable metric but rather deal directly with the multidimensional nature of the parameter space.

%We are also interested in the joint H\"older continuity of
%a multiparameter process in the following sense.
Let us explain
our motivation by considering the two parameter fractional Brownian
field $\{ W(x_1, x_2), (x_1, x_2)\in [0,1]^2\}$ of Hurst parameter
$H=(H_1, H_2)$.  Given two points $x$ and $y$ in $\RR^2$, we consider
the increment of $W$ along with the rectangle determined by $x=(x_1, x_2)$ and
$y=(y_1, y_2)$:
\begin{equation}
\square W:=W(y_1, y_2) -W(x_1, y_2)-W(x_2, y_1)+W(x_1, x_2)\,.
\label{e.1.3}
\end{equation}
In \cite{Ralchenko}, using a two-parameter version \eqref{ineq:sobolev}, the author showed that for any  $\be_1, \be_2$ with $\be_1<H_1$  and
$\be_2<H_2$, there is an integrable random  constant $L_{\be_1,
\be_2}$ such that
\begin{equation}
|\square W|\le L_{\be_1,
\be_2}|y_1-x_1|^{\be_1}|y_2-x_2|^{\be_2}\,.\label{e.1.4}
\end{equation}
The above result was also obtained in \cite{ayache-antoine-leger} based on a two-parameter version of Kolmogorov continuity theorem. Along the paper (in Corollary \ref{cor:fbm}), we shall see that the following sharper inequality than \eqref{e.1.4} holds
\begin{equation}
|\square W|\le L_{H_1, H_2} |y_1-x_1|^{ H_1}|y_2-x_2|^{H_2}
\sqrt{\left| \log \left(|y_1-x_1| |y_2-x_2| \right)\right| }\,.\label{e.1.5}
\end{equation}
Consequently, this estimate implies
\begin{multline*}
|W(x_1,x_2)-W(y_1,y_2)|\le L_{H_1, H_2}\left(|x_1-y_1|^{H_1}|x_2|^{H_2}\sqrt{\left|\log (|x_1-y_1||x_2|)\right|}\right.\\
\left.+|x_1|^{H_1}|x_2-y_2|^{H_2}\sqrt{\left|\log (|x_2-y_2||x_1|)\right|} \right)
\end{multline*}
which improves \eqref{e.wxy}. 
%To our best knowledge, the estimate \eqref{e.1.5} is the first of its kind in literature. 
We shall call such property as in \eqref{e.1.5} or \eqref{e.1.4} joint H\"older continuity. It turns out that a large class of Gaussian fields enjoys sample path joint H\"older continuity (Theorem \ref{thm:contW}.)

Our method is first formulate and
prove  a multiparameter version of the classical Garsia-Rodemich-Rumsey inequality (\ref{ineq:GRR}). The generalized inequality is then applied to obtain sample path joint H\"older continuity for random fields. Our result   generalizes the results in
\cite{garsiarodemich}, \cite{Ralchenko} and provides a different approach  for sample path property problem of random fields (compare to the approach in \cite{ayache-antoine-leger}, \cite{ayache-xiao} and \cite{xiao}.)

The paper is structured as follows. In Section \ref{sec:grr}, we shall state and prove our multiparameter version of
the Garsia-Rodemich-Rumsey lemma.  The idea is to use induction on the dimension of the parameter space
after some observations of the property of operator $\square$
defined by \eref{e.1.3}.  Some
part of the proof is similar to the original proof of
Garsia-Rodemich-Rumsey \cite{garsiarodemich} with some modification.
However, we feel it is more appropriate to give a  detailed proof.

In Section \ref{sec:random-fields}, we introduce a multiparameter
version of Kolmogorov continuity criteria (Theorem \ref{thm:Kolmogorov}).
To our best knowledge, a two-parameter of Theorem \ref{thm:Kolmogorov}
first appeared in \cite{ayache-antoine-leger}.

Section \ref{sec:gaussian-fields} is devoted for the study of sample path joint continuity for
Gaussian fields. We give a sufficient condition for a Gaussian field to
 possess sample path joint continuity (Theorem \ref{thm:contW}).   We also derive
 the estimate \eref{e.1.5} for fractional Gaussian field.  In Section
 \ref{sec:spde}, we shall study
the joint H\"older continuity of solution of a stochastic  heat equation with
additive space-time white noise.

\setcounter{equation}{0}
\section{Multiparameter Garsia-Rodemich-Rumsey\\ inequality}\label{sec:grr}

We state the following technical lemma which generalizes  a crucial
argument used in \cite{garsiarodemich} in  the proof of Lemma
\ref{lem:grr}.
\begin{lem}
\label{lem:tk}Let $(\Omega\,, \cF) $ be a measurable space and let
$\mu$ be a positive measure on $(\Omega\,, \cF)$.  Let
$g:\Omega\times[0,1]\to\mathbb{R}^{m}$ be a measurable function such
that
\[
\int_{0}^{1}\int_{0}^{1}\int_{\Omega}\Psi\left(\frac{|g(z,t)-g(z,s)|}{p(|t-s|)}\right)\mu(dz)dsdt\le B<\infty.
\]
Then there exist two decreasing sequences $\{t_{k}\,, k=0, 1,   \cdots\}$ and $\{d_{k}\,, k=0, 1,   \cdots\}$ with
\begin{equation}
t_{k}\le d_{k-1}=p^{-1}\left(\frac{1}{2}p(t_{k-1})\right)\,, \quad k=1, 2, \cdots\label{eq:tdk}
\end{equation}
such that  the following inequality holds
\begin{equation}
\int_{\Omega}\Psi\left(\frac{|g(z,t_{k})-g(z,t_{k-1})|}{p(|t_k-t_{k-1}|)}\right)
\mu(dz)\le\frac{4B}{d_{k-1}^{2}}\,.  \label{ineq:gtn}
\end{equation}
\end{lem}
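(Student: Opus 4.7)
The plan is to adapt the classical one-variable averaging argument used in the original proof of Lemma \ref{lem:grr} to the present setting by absorbing the $\mu$-integral into the integrand. Define the two auxiliary functions
\[
I(s) := \int_0^1 \!\!\int_\Omega \Psi\!\left(\frac{|g(z,s)-g(z,r)|}{p(|s-r|)}\right) \mu(dz)\, dr, \qquad J_u(s) := \int_\Omega \Psi\!\left(\frac{|g(z,s)-g(z,u)|}{p(|s-u|)}\right) \mu(dz).
\]
The hypothesis reads $\int_0^1 I(s)\, ds \le B$, and by Fubini $\int_0^1 J_u(s)\, ds = I(u)$ for every $u \in [0,1]$.

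The sequences will be built recursively. First pick $t_0 \in [0,1]$ with $I(t_0) \le B$, which is possible since the average of $I$ is at most $B$. Adopt the convention $d_{-1} := 1$, so the inductive hypothesis $I(t_{k-1}) \le 2B/d_{k-2}$ holds at $k=1$. Given $t_{k-1}$, define $d_{k-1} := p^{-1}(p(t_{k-1})/2)$ and look for $t_k \in (0, d_{k-1}]$ satisfying simultaneously
\[
I(t_k) \le \frac{2B}{d_{k-1}} \qquad \text{and} \qquad J_{t_{k-1}}(t_k) \le \frac{2\, I(t_{k-1})}{d_{k-1}}.
\]
Existence follows from Markov's inequality: the exceptional set for the first condition has Lebesgue measure at most $d_{k-1}/2$ (using $\int_0^1 I \le B$), and likewise for the second (using $\int_0^1 J_{t_{k-1}} = I(t_{k-1})$). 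Their union cannot exhaust $(0, d_{k-1}]$, so the desired $t_k$ exists. Combining the second inequality with the inductive bound on $I(t_{k-1})$ yields
\[
J_{t_{k-1}}(t_k) \le \frac{4B}{d_{k-1}\, d_{k-2}}.
\]

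To finish, one verifies the monotonicity of $\{d_k\}$ and replaces $d_{k-2}$ by the smaller $d_{k-1}$ in the denominator. Since $t_{k-1} \le d_{k-2}$, the monotonicity of $p$ gives $p(t_{k-1}) \le p(d_{k-2}) = p(t_{k-2})/2$, hence $d_{k-1} = p^{-1}(p(t_{k-1})/2) \le p^{-1}(p(t_{k-2})/2) = d_{k-2}$, so $\{d_k\}$ is decreasing; likewise $t_k \le d_{k-1} \le t_{k-1}$ shows $\{t_k\}$ is decreasing. The bound $4B/(d_{k-1}d_{k-2}) \le 4B/d_{k-1}^2$ then follows at once. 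The only delicate point is the calibration of the two Markov thresholds so that the exceptional sets together leave room inside $(0, d_{k-1}]$; this is handled exactly as in the one-dimensional case, and everything else is a direct adaptation of the Garsia-Rodemich-Rumsey averaging procedure to the presence of the extra parameter $z \in \Omega$.
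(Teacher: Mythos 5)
Your proposal is correct and follows essentially the same averaging argument as the paper: choose $t_0$ with $I(t_0)\le B$, then recursively pick $t_k\le d_{k-1}=p^{-1}\bigl(\tfrac12 p(t_{k-1})\bigr)$ avoiding the two Markov exceptional sets, and chain $J_{t_{k-1}}(t_k)\le 2I(t_{k-1})/d_{k-1}\le 4B/(d_{k-1}d_{k-2})\le 4B/d_{k-1}^2$. The only cosmetic refinement would be to note that each exceptional set has measure \emph{strictly} less than $d_{k-1}/2$ (as the paper states), which is what guarantees the union cannot cover $(0,d_{k-1}]$; otherwise your convention $d_{-1}=1$ and the monotonicity check match the paper's reasoning.
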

\begin{proof}
We follow the argument in \cite{garsiarodemich}. Let
\[
I(t)=\int_{0}^{1}\int_{\Omega}\Psi\left(\frac{|g(z,t)-g(z,s)|}{p(|t-s|)}\right)\mu(dz)ds.
\]
From  the  assumption $\int_{0}^{1}I(t)dt\le B$    it follows that
there is some  $t_{0}\in(0,1)$ such that
\[
I(t_{0})\le B.
\]
Now we can describe how to obtain the sequences $d_k$ and $t_k$
recursively for $k=1, 2, \cdots$. Given $t_{k-1}$,  define
\[
d_{k-1}=p^{-1}\left(\frac{1}{2}p(t_{k-1})\right)\,.
\]
Then we  choose $t_{k}\le d_{k-1}$   such that
\begin{equation}
I(t_{k})\le\frac{2B}{d_{k-1}}\label{ineq:itk}
\end{equation}
and
\begin{equation}
\int_{\Omega}\Psi\left(\frac{|g(z,t_{k})-g(z,t_{k-1})}{p(|t_{k}-t_{k-1}|)}\right)
\mu(dz)\le\frac{2I(t_{k-1})}{d_{k-1}}\,.\label{ineq:psitk}
\end{equation}
It is always possible to find $t_k$ such that the inequalities
\eref{ineq:itk} and \eref{ineq:psitk} hold simultaneously, since
each of the two inequalities can be violated only on a set of
$t_{k}$'s of measure strictly less than $\frac{1}{2}d_{k-1}$.   Now
(\ref{ineq:itk}) and (\ref{ineq:psitk}) gives
\[
\int_{\Omega}\Psi\left(\frac{|g(z,t_{k})-g(z,t_{k-1})|}{p(|t_k-t_{k-1}|)}\right)\mu(dz)\le
\frac{2I(t_{k-1})}{d_{k-1}}\le \frac{4B}{ d_{k-1}d_{k-2}} \le
 \frac{4B}{  d_{k-1}^2}\,.
\]
This is \eref{ineq:gtn}.
\end{proof}

Let $x=(x_{1},\dots,x_{n})$ and $y=(y_{1},\dots,y_{n})$ be in
$\mathbb{R}^{n}$. We denote $x'=(x_{1},\dots,x_{n-1})$ and
$y'=(y_{1},\dots,y_{n-1})$.  For each integer $k=1, 2, \cdots, n$,
we define
\[
V_{k,y}x=(x_{1},\dots,x_{k-1},y_{k},x_{k+1},\dots,x_{n}).
\]
Let $f$ be a function from $\mathbb{R}^{n}$ to $\mathbb{R}^{m}$. We
define  the operator $V_{k,y}$ acting  on $f$ in the  following way:
\[
V_{k,y}f(x)=f(V_{k,y}x)
\]
or $V_{k,y}f=f\circ V_{k,y}$ in short. It is straight forward to
verify  that
\[
V_{k,y}V_{k,y}=V_{k,y}
\]
and
\[
V_{k,y}V_{l,y}=V_{l,y}V_{k,y}
\]
for $k\neq l$. Next,  we  define the joint increment of a  function $f$ on an $n$-dimensional rectangle, namely
\[
\square_{y}^{n}f(x)=\prod_{k=1}^{n}(I-V_{k,y})f(x)
\]
where $I$ denotes the identity operator.
\begin{example} If $n=2$,  then it is easy to see
that $\square_{y}^{2}f(x)=f(y_1,y_2)-f(x_1,y_2)-f(y_1, x_2)+f(x_1,
y_2)$, which is the increment of $f$ over the rectangle  containing
the two points $x$ and $y$ with all sides parallel to the axis. In
particular, if $f(x_1, x_2)=x_1x_2$,  then
$\square_{y}^{2}f(x)=(x_1-y_1)(x_2-y_2)$, which is the area of the
rectangle. In a more general case, when $f$ has the form $f(x)=\prod_{j=1}^nf_j(x_j)$, then
\[\square_y^nf(x)=\prod_{j=1}^n[f_j(x_j)-f_j(y_j)]\,.\]
\end{example}
The following simple identity enable us to show our theorem by
induction and plays an essential role in our approach:
\begin{eqnarray}
\square_{y}^{n}f(x) & = & \prod_{k=1}^{n-1}(I-V_{k,y})f(x)-V_{n,y}
\prod_{k=1}^{n-1}(I-V_{k,y})f(x)\nonumber \\
 & = &
 \square_{y'}^{n-1}f(x',x_{n})-\square_{y'}^{n-1}f(x',y_{n}).\label{e.2.5}
\end{eqnarray}

We are now in the position to state our general version of Lemma \ref{lem:grr}.
\begin{thm}
\label{lem:grrn}Let $f(x)$ be a continuous function on $[0,1]^{n}$
and suppose that
\begin{equation}
\int_{[0,1]^{n}}\int_{[0,1]^{n}}\Psi\left(\frac{|\square_{y}^{n}f(x)|}
{\prod_{k=1}^{n}p_{k}(|x_{k}-y_{k}|)}\right)dxdy\le B<\infty\,.
\label{cond.b}
\end{equation}
Then for all $s,t\in[0,1]^{n}$  we have
\begin{equation}
|\square_{s}^{n}f(t)|\le8^{n}\int_{0}^{|s_{1}-t_{1}|}\cdots\int_{0}^{|s_{n}-t_{n}|}
\Psi^{-1}\left(\frac{4^{n}B}{u_{1}^{2}\cdots u_{n}^{2}}\right)dp_{1}(u_{1})\cdots dp_{n}(u_{n}).\label{ineq:grr1}
\end{equation}
\end{thm}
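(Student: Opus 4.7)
The plan is induction on $n$. The base case $n=1$ is Lemma~\ref{lem:grr}. For the inductive step, the pivot is identity (\ref{e.2.5}):
\[ \square_s^n f(t) \;=\; \square_{s'}^{n-1} f(t', t_n) \;-\; \square_{s'}^{n-1} f(t', s_n), \]
which exhibits an $n$-dimensional increment as a jump in the last coordinate of the $(n-1)$-dimensional quantity $\phi(u) := \square_{s'}^{n-1} f(t', u)$, thereby reducing the problem to a one-parameter chaining in $u$ with the $(n-1)$-dimensional induction hypothesis controlling each increment of $\phi$.

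Concretely, I would introduce the auxiliary function
\[ g(z, u) \;:=\; \frac{\square_{y'}^{n-1} f(x', u)}{\prod_{k=1}^{n-1} p_k(|x_k - y_k|)}, \qquad z = (x', y') \in \Omega := [0,1]^{n-1} \times [0,1]^{n-1}, \]
so that $|g(z,u) - g(z,v)|/p_n(|u-v|)$ is exactly the argument of $\Psi$ in (\ref{cond.b}) at $x=(x',v)$, $y=(y',u)$. Hence (\ref{cond.b}) is the integrability hypothesis of Lemma~\ref{lem:tk} for this $g$, with measure $\mu = dx'\,dy'$ on $\Omega$ and parameter function $p_n$. Applying that lemma yields dyadic sequences $\{t_k^{(n)}\}$, $\{d_k^{(n)}\}$ satisfying (\ref{eq:tdk}) with $p=p_n$, and the pointwise bound (\ref{ineq:gtn}) evaluated at $x_n = t_{k-1}^{(n)}$, $y_n = t_k^{(n)}$. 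Rescaling shows that $G_k(x'):=[f(x',t_k^{(n)})-f(x',t_{k-1}^{(n)})]/p_n(|t_k^{(n)}-t_{k-1}^{(n)}|)$ satisfies the $(n-1)$-dimensional hypothesis (\ref{cond.b}) with $B$ replaced by $4B/[d_{k-1}^{(n)}]^2$, so the induction hypothesis bounds $|\square_{s'}^{n-1} G_k(t')|$ by $8^{n-1}$ times the iterated Stieltjes integral of $\Psi^{-1}\bigl(4^n B/([d_{k-1}^{(n)}]^2 u_1^2\cdots u_{n-1}^2)\bigr)$. Multiplying back by $p_n(|t_k^{(n)}-t_{k-1}^{(n)}|)$ recovers the increment $\phi(t_k^{(n)}) - \phi(t_{k-1}^{(n)})$.

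The final step mirrors the classical one-parameter argument. After an obvious translation (Lemma~\ref{lem:tk} constructs $t_k \to 0$), I would build two dyadic chains sharing a common anchor, one converging to $t_n$ and the other to $s_n$; continuity of $f$ forces the telescoping series to converge, yielding $\phi(t_n)-\phi(s_n) = \square_s^n f(t)$. Summing the previous bounds over both chains and recognizing $\sum_k p_n(|t_k^{(n)}-t_{k-1}^{(n)}|)\,\Psi^{-1}\bigl(4^n B/([d_{k-1}^{(n)}]^2\prod_{j<n}u_j^2)\bigr)$ as an upper Riemann--Stieltjes sum for $\int_0^{|s_n-t_n|} \Psi^{-1}\bigl(4^n B/(u_n^2\prod_{j<n}u_j^2)\bigr)\,dp_n(u_n)$---which is where the relations $p_n(d_{k-1}^{(n)}) = p_n(t_{k-1}^{(n)})/2$ and $|t_k-t_{k-1}|\le t_{k-1}$ enter---yields (\ref{ineq:grr1}) with the expected constants $8^n$ and $4^n$. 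The main obstacle is precisely this bookkeeping: ensuring that the factors $8$ and $4$ each pick up exactly one power of $n$ per induction level requires the same careful Riemann--Stieltjes comparison as in the original Garsia--Rodemich--Rumsey argument, only now iterated $n$ times; by contrast, the structural step of identifying $g$ is forced by identity (\ref{e.2.5}) and the definition of $\square^n$.
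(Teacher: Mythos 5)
Your proposal reproduces the paper's proof in all essential respects: the induction on $n$ anchored at Lemma~\ref{lem:grr}, the use of identity \eqref{e.2.5} to view $\square_s^n f(t)$ as an increment in the last coordinate, the auxiliary function $g(z,u)$ on $\Omega=[0,1]^{n-1}\times[0,1]^{n-1}$ fed into Lemma~\ref{lem:tk} with $p_n$, the rescaled functions $G_k$ (the paper's $h_k$) satisfying the $(n-1)$-dimensional hypothesis with constant $4B/d_{k-1}^2$, and the Riemann--Stieltjes comparison via $p_n(t_{k-1}-t_k)\le p_n(t_{k-1})=2p_n(d_{k-1})\le 4\left[p_n(d_{k-1})-p_n(d_k)\right]$ are exactly the paper's steps, with the same accounting of the constants $8^n$ and $4^n$. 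The only divergence is the endgame for general $s_n,t_n$: the paper first bounds $|\square_a^{n-1}f(b,1)-\square_a^{n-1}f(b,0)|$ (one chain to $0$, a second chain to $1$ obtained by replacing $f(x',x_n)$ with $f(x',1-x_n)$), and then handles arbitrary $s_n,t_n$ by the affine substitution $\bar f(t',\tau)=f(t',s_n+\tau(t_n-s_n))$, $\bar p_n(u)=p_n(u|s_n-t_n|)$, which after restricting the domain turns the hypothesis constant into $B/|s_n-t_n|^2$ and, after changing variables back, produces the integral up to $|s_n-t_n|$. You instead propose chaining directly from a common interior anchor to $s_n$ and to $t_n$ (Stroock--Varadhan style); this works and gives the same constants, but it is a bit more than ``an obvious translation'' of Lemma~\ref{lem:tk}: to make your Stieltjes sums terminate at $|s_n-t_n|$ rather than at $1$, the anchor must be selected inside the interval between $s_n$ and $t_n$ using the integral restricted to that subinterval (still $\le B$), and the whole selection must be confined there so that every $d_k\le|s_n-t_n|$; with that restriction made explicit your argument is equivalent to the paper's rescaling step, just organized without the change of variables.
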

\begin{proof}
We proceed by induction on $n$. For $n=1$, it coincides with the
original Garsia-Rodemich-Rumsey inequality (\ref{ineq:GRR}). Suppose
(\ref{ineq:grr1}) holds for $n-1$. Let $f$ be a continuous function
on $[0,1]^{n}$. For any   $x',y'\in\mathbb{R}^{n-1}$ and any
$s\in[0,1]$,  put
\[
g(x',y',s)=\frac{\square_{y'}^{n-1}f(x',s)}{\prod_{k=1}^{n-1}p_{k}(|x_{k}'-y_{k}'|)}.
\]
Let $\Omega=[0,1]^{n-1}\times[0,1]^{n-1}$, $z=(x',y')$.  By
\eref{e.2.5}  we can rewrite (\ref{cond.b}) as
\[
\int_{0}^{1}\int_{0}^{1}\int_{\Omega}\Psi\left(\frac{|g(z,s)-g(z,t)|}{p_{n}(|s-t|)}\right)dzdsdt\le B<\infty.
\]
Applying  Lemma \ref{lem:tk}, we can find sequences $\{t_{k}\}$ and
$\{d_{k}\}$ such that
\begin{equation}
t_{k}\le d_{k-1}=p_{n}^{-1}\left(\frac{1}{2}p_{n}(t_{k-1})\right)\label{eq:tdk-1}
\end{equation}
and
\begin{equation}
\int_{\Omega}\Psi\left(\frac{|g(z,t_{k})-g(z,t_{k-1})|}{p_{n}(t_{k}-t_{k-1})}\right)dz
\le\frac{4B}{d_{k-1}^{2}}.\label{ineq:psigtk}
\end{equation}
For each $k\in\mathbb{N}$ and $x'\in[0,1]^{n-1}$, let
\[
h_{k}(x')=\frac{f(x',t_{k})-f(x',t_{k-1})}{p_{n}(|t_{k}-t_{k-1}|)}.
\]
Again from \eref{e.2.5} it follows
\begin{eqnarray*}
\frac{\square_{y'}^{n-1}h_{k}(x')}{\prod_{i=1}^{n-1}p_{i}(|x_{i}'-y_{i}'|)} & = &
\frac{\square_{y'}^{n-1}f(x',t_{k})-\square_{y'}^{n-1}f(x',t_{k-1})}
{\prod_{i=1}^{n-1}p_{i}(|x_{i}'-y_{i}'|)p_{n}(|t_{k}-t_{k-1}|)}\\
 & = & \frac{g(x',y',t_{k})-g(x',y',t_{k-1})}{p_{n}(|t_{k}-t_{k-1}|)}\,.
\end{eqnarray*}
Thus,  the inequality (\ref{ineq:psigtk}) becomes
\[
\int_{[0,1]^{n-1}}\int_{[0,1]^{n-1}}\Psi\left(\frac{\square_{y'}^{n-1}h_{k}(x')}
{\prod_{i=1}^{n-1}p_{i}(|x_{i}'-y_{i}'|)}\right)dx'dy'\le\frac{4B}{d_{k-1}^{2}}.
\]
Now,  by our induction hypothesis, for every $k\ge1$,
$a,b\in[0,1]^{n-1}$, $a=(a_{1},\dots,a_{n-1})$ and
$b=(b_{1},\dots,b_{n-1})$,
\begin{eqnarray*}
|\square_{a}^{n-1}h_{k}(b)|
&\le&8^{n-1}\int_{0}^{|a_{1}-b_{1}|}\cdots\int_{0}^{|a_{n-1}-b_{n-1}|}\Psi^{-1}\left(\frac{4^{n}B}{u_{1}^{2}\cdots
u_{n-1}^{2}d_{k-1}^{2}}\right)\\
&&\qquad dp_{1}(u_{1})\cdots dp_{n-1}(u_{n-1})\,.
\end{eqnarray*}
Denoting
$A=[0,|a_{1}-b_{1}|]\times\cdots\times[0,|a_{n-1}-b_{n-1}|]$ and
$dp(u_{1},\cdots,u_{n-1})=dp_{1}(u_{1})\cdots dp_{n-1}(u_{n-1})$,
the above inequality can be rewritten as
\begin{equation}
|\square_{a}^{n-1}f(b,t_{k})-\square_{a}^{n-1}f(b,t_{k-1})|
\le8^{n-1}\int_{A}\Psi^{-1}\left(\frac{4^{n}B}{u_{1}^{2}\cdots u_{n-1}^{2}d_{k-1}^{2}}\right)
dp(u_{1},\cdots,u_{n-1})p_{n}(t_{k-1}-t_{k}).\label{ineq:fab}
\end{equation}
On the other hand, by (\ref{eq:tdk-1}),  we have
\begin{eqnarray*}
p_{n}(t_{k-1}-t_{k}) & \le & p_{n}(t_{k-1})\\
 & = & 2p(d_{k-1})\le4\left[p(d_{k-1})-p(d_{k})\right].
\end{eqnarray*}
Combining this inequality with (\ref{ineq:fab}) yields
\begin{eqnarray*}
&&|\square_{a}^{n-1}f(b,t_{0})-\square_{a}^{n-1}f(b,0)|\\
&\le& \sum_{k=1}^{\infty}|\square_{a}^{n-1}f(b,t_{k})-\square_{a}^{n-1}f(b,t_{k-1})|\\
&\le& 8^{n-1}\sum_{k=1}^{\infty}4\left[p(d_{k-1})-p(d_{k})\right]\int_{A}\Psi^{-1}\left(\frac{4^{n}B}{u_{1}^{2}\cdots u_{n-1}^{2}d_{k-1}^{2}}\right)dp(u_{1},\cdots,u_{n-1})\\
&\le& 8^{n-1}\sum_{k=1}^{\infty}4\int_{d_{k}}^{d_{k-1}}\int_{A}\Psi^{-1}\left(\frac{4^{n}B}{u_{1}^{2}\cdots u_{n-1}^{2}u_{n}^{2}}\right)dp(u_{1},\cdots,u_{n-1})dp_{n}(u_{n})\\
&\le&
8^{n-1}4\int_{0}^{1}\int_{A}\Psi^{-1}\left(\frac{4^{n}B}{u_{1}^{2}\cdots
u_{n}^{2}}\right)dp(u_{1},\cdots,u_{n-1})dp_{n}(u_{n}).
\end{eqnarray*}
With $f(x',1-x_{n})$ replaced  $f(x',x_{n})$ we can obtain the same
bound for
\[
|\square_{a}^{n-1}f(b,t_{0})-\square_{a}^{n-1}f(b,1)|.
\]
Hence,  for every $a,b\in[0,1]^{n-1}$,
\begin{equation}
|\square_{a}^{n-1}f(b,1)-\square_{a}^{n-1}f(b,0)|\le8^{n}\int_{0}^{1}\int_{A}\Psi^{-1}\left(\frac{4^{n}B}{u_{1}^{2}\cdots u_{n}^{2}}\right)dp(u_{1},\cdots,u_{n-1})dp_{n}(u_{n}).\label{ineq:fab01}
\end{equation}
To obtain (\ref{ineq:grr1}) for general $s,t$ in $[0,1]^{n}$, we
set
\[
\bar{f}(t',\tau)=f(t',s_{n}+\tau(t_{n}-s_{n}))\hbox{ for $\tau\in[0,1]$}
\]
and
\[
\bar{p}_{n}(u)=p_{n}(u|s_{n}-t_{n}|).
\]
Upon restricting the range of the integration in (\ref{cond.b}) and
carrying out a change of variables we get
\[
\int_{[0,1]^{n}}\int_{[0,1]^{n}}\Psi\left(\frac{|\square_{y}^{n}\bar{f}(x)|}{\prod_{k=1}^{n-1}p_{k}(|x_{k}-y_{k}|)\bar{p}_{n}(|x_{n}-y_{n}|)}\right)dxdy\le\frac{B}{|s_{n}-t_{n}|^{2}}.
\]
Thus, by (\ref{ineq:fab01}), we deduce
\begin{eqnarray*}
|\square_{s}^{n}f(t)|
&=&|\square_{s'}^{n-1}\bar{f}(t',1)-\square_{s'}^{n-1}\bar{f}(t',0)|\\
&\le&
8^{n}\int_{0}^{1}\int_{0}^{|s_{1}-t_{1}|}\cdots\int_{0}^{|s_{n-1}-t_{n-1}|}
\Psi^{-1}\left(\frac{4^{n}B}{u_{1}^{2}\cdots
u_{n}^{2}|s_{n}-t_{n}|^{2}}\right)\\
&&\qquad dp(u_{1},\cdots,u_{n-1})dp_{n}(u_{n}|s_{n}-t_{n}|).
\end{eqnarray*}
Another  change of variables yields  (\ref{ineq:grr1}).
\end{proof}

\setcounter{equation}{0}
\section{Sample path joint H\"older continuity of random fields}\label{sec:random-fields}

\global\long\def\RR{\mathbb{R}}
\global\long\def\EE{\mathbb{E}}

In this section, given a continuous random field $W$, we study sample path joint
continuity property. The first application of Theorem \ref{lem:grrn}
is the following criteria for joint continuity of sample paths which
is similar to Kolmogorov continuity theorem, which we shall call
joint Kolmogorov continuity theorem.
\begin{thm}
\label{thm:Kolmogorov}Let $W$ be a continuous random field on $\RR^{n}$. Suppose
there exist positive constants $\alpha,\beta_{k}$ $(1\le k\le n)$
and $K$ such that for every $x,y$ in $[0,1]^{n}$,
\[
\EE\left[\left|\square_{y}^{n}W(x)\right|^{\alpha}\right]\le K\prod_{k=1}^{n}|x_{k}-y_{k}|^{1+\beta_{k}}.
\]
Then, for every $\epsilon=(\epsilon_{1},\dots,\epsilon_{n})$ with
$0<\epsilon_{k}\alpha<\beta_{k}$ $(1\le k\le n)$, there exist a
random variable $\eta$ with $\EE\eta^{\alpha}\le K$, such that the
following inequality holds almost surely
\[
|\square_{t}^{n}W(s)|\le C\eta(\omega)\prod_{k=1}^{n}|t_{k}-s_{k}|^{\beta_{k}\alpha^{-1}-\epsilon_{k}}
\]
for all $s,t$ in $[0,1]^{n}$, where $C$ is a constant defined by
\[
C=8^{n}4^{n/\alpha}\prod_{k=1}^{n}\left(1+\frac{2}{\beta_{k}-\alpha\epsilon_{k}}\right).
\]
\end{thm}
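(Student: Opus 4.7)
The plan is to apply the multiparameter Garsia--Rodemich--Rumsey inequality (Theorem \ref{lem:grrn}) pathwise to $W$, with the specializations $\Psi(u) = u^{\alpha}$ and $p_k(u) = u^{a_k}$ for exponents $a_k$ to be chosen. This reduces the theorem to two calculations: pinning down $a_k$ so that the right-hand side of (\ref{ineq:grr1}) produces the advertised H\"older exponent and constant, and showing that the associated functional of the sample path is almost surely finite with the correct moment bound.

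For the first calculation, I would compute $\int_0^r u^{-2/\alpha}\,dp_k(u) = \frac{a_k}{a_k - 2/\alpha}\,r^{a_k - 2/\alpha}$, so the choice $a_k := (\beta_k + 2 - \alpha\epsilon_k)/\alpha$ yields the H\"older exponent $\beta_k/\alpha - \epsilon_k$ in coordinate $k$ together with the prefactor $a_k/(a_k - 2/\alpha) = 1 + 2/(\beta_k - \alpha\epsilon_k)$. Multiplying by the $8^n$ from (\ref{ineq:grr1}) and the $4^{n/\alpha}$ coming from $\Psi^{-1}(4^nB/\prod u_j^2) = (4^nB)^{1/\alpha}\prod u_j^{-2/\alpha}$ reproduces exactly the constant $C$ in the statement. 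The hypothesis $0 < \alpha\epsilon_k < \beta_k$ ensures $a_k > 2/\alpha$, so the inner integrals converge.

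For the second calculation, I would introduce
\[
\eta^{\alpha} := \int_{[0,1]^n}\!\int_{[0,1]^n} \frac{|\square_y^n W(x)|^{\alpha}}{\prod_{k=1}^n |x_k-y_k|^{a_k\alpha}}\, dx\, dy
\]
(renormalized by a deterministic constant if need be). Fubini and the moment hypothesis give $\EE \eta^\alpha \le K \prod_k \int_0^1\!\int_0^1 |x_k-y_k|^{\alpha\epsilon_k - 1}\,dx_k\,dy_k$, a finite quantity since $\alpha\epsilon_k > 0$. Thus $\eta < \infty$ almost surely; on this full-measure event, Theorem \ref{lem:grrn} applied pointwise in $\omega$ with $B=\eta^\alpha$, combined with the continuity of the sample paths of $W$, delivers the claimed bound simultaneously for all $s,t\in[0,1]^n$. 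The only real difficulty is cosmetic constant-bookkeeping: choosing the normalization of $\eta$ so that $\EE\eta^\alpha \le K$ holds literally as stated. All the substantive content is already packaged inside Theorem \ref{lem:grrn}.
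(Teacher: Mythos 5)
Your proposal is correct and follows essentially the same route as the paper: apply Theorem \ref{lem:grrn} with $\Psi(u)=|u|^{\alpha}$ and $p_k(u)=|u|^{\gamma_k}$, take $\eta^{\alpha}$ to be the pathwise double integral $B(\omega)$, bound $\EE B$ by Fubini and the moment hypothesis, and read off the H\"older exponent and constant; your explicit choice $a_k=(\beta_k+2-\alpha\epsilon_k)/\alpha$ just makes concrete the paper's choice $\gamma_k\in(\tfrac{2}{\alpha},\tfrac{2+\beta_k}{\alpha})$. The normalization issue you flag (Fubini actually gives $\EE\eta^{\alpha}\le K\prod_k\tfrac{2}{\alpha\epsilon_k(1+\alpha\epsilon_k)}$ rather than $\le K$) is present in the paper's own proof as well, so it is not a gap specific to your argument.
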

\begin{proof}
Let $\Psi(u)=|u|^{\alpha}$, $p_{k}(u)=|u|^{\gamma_{k}}$ where
$\gamma_{k}\in(\frac{2}{\alpha},\frac{2+\beta_{k}}{\alpha})$, $1\le
k\le n$. A direct application of  Theorem \ref{lem:grrn} gives
that   for all $s,t$ in $[0,1]^{n}$
\begin{eqnarray}
|\square_{s}^{n}W(t)| & \le &
8^{n}\prod_{k=1}^{n}\frac{\gamma_{k}|t_{k}-s_{k}|^{\gamma_{k}-\frac{2}
{\alpha}}}{\gamma_{k}-\frac{2}{\alpha}}\left(4^{n}\iint_{[0,1]^{2n}}
\frac{|\square_{y}^{n}W(x)|^{\alpha}}{\prod_{k=1}^{n}|x{}_{k}-y_{k}|^{\alpha\gamma_{k}}}dxdy
\right)^{\frac{1}{\alpha}}\,. \nonumber\\
\label{ineq:fst}
\end{eqnarray}
Let
\[
B(\omega)=\iint_{[0,1]^{2n}}\Psi\left(\frac{|\square_{y}^{n}W(x)|}{\prod_{k=1}^{n}p_{k}(x_{k}-y_{k})}\right)dxdy.
\]
From our assumption and Fubini-Tonelli's theorem,
\begin{eqnarray*}
\EE B & = & \iint_{[0,1]^{2n}}\frac{\EE|\square_{y}^{n}W(x)|^{\alpha}}{\prod_{k=1}^{n}|x_{k}-y_{k}|^{\alpha\gamma_{k}}}dxdy\\
 & \le & K\iint_{[0,1]^{2n}}\prod_{k=1}^{n}|x_{k}-y_{k}|^{1+\beta_{k}-\alpha\gamma_{k}}dxdy<\infty.
\end{eqnarray*}
Hence, the event $\Omega^{*}=\{\omega:B(\omega)<\infty\}$ has probability
one. Therefore for each $\omega$ in $\Omega^{*}$, the inequality
(\ref{ineq:fst}) gives
\[
|\square_{s}^{n}W(t,\omega)|\le8^{n}\prod_{k=1}^{n}\frac{\gamma_{k}|t_{k}-s_{k}|^{\gamma_{k}-\frac{2}{\alpha}}}{\gamma_{k}-\frac{2}{\alpha}}\left(4^{n}B(\omega)\right)^{\frac{1}{\alpha}}
\]
for every $s,t$ in $[0,1]^{n}$. For each $k$, the power
$\gamma_{k}-\frac{2}{\alpha}$ can be made arbitrarily close to
$\frac{\beta_{k}}{\alpha}$. This completes  the proof  with
$\eta=B^{1/\alpha}$.
\end{proof}

\begin{rem} The result obtained by Ral'chenko \cite{Ralchenko}
 was the inequality \eqref{ineq:fst}  in the case  $n=2$.
\end{rem}

\setcounter{equation}{0}

\section{Sample path joint continuity of Gaussian fields}\label{sec:gaussian-fields}
We now focus on sample path joint continuity of Gaussian random fields.
In case of Gaussian processes ($n=1$), one of the first sufficient
and necessary conditions for sample path continuity
was given by Fernique
\cite{fernique} (see also \cite{ferniquebook}).
Namely, let $p(u)$ be an increasing positive function
such that
\begin{equation}
\EE|W(x)-W(y)|^{2}\le p^{2}(|x-y|)\label{pdefn}
\end{equation}
for any pair $(x,y)$ in $[0,1]^{2}$. Then Fernique \cite{fernique}
showed that a sufficient condition for almost sure continuity of the process $(W(x), 0\le x\le 1)$ is
\[
\int_{0}^{1}\frac{p(u)}{u\sqrt{\log\frac{1}{u}}}du<\infty.
\]
In the original paper of
Garsia-Rodemich-Rumsey \cite{garsiarodemich}, the authors also observed
that the above condition is equivalent to the condition (by
integration by part)
\[
\int_{0}^{1}\sqrt{\log\frac{1}{u}}dp(u)<\infty.
\]
Later, it was shown that the above condition is also necessary
\cite{fernique66, marcus-shepp}. %The following theorem is taken from \cite{jain-marcus}
%\begin{theorem}[Jain-Marcus]\label{thm:jain-marcus}
%If a centered Gaussian process with continuous covariance satisfies
%\[\EE [W(x)-W(y)]^2\ge p^2(x-y)
%\]
%for some non-decreasing function $p$ on $[0,1]$ and
%\[\int_0^1\sqrt{\log \frac1u}dp(u)=\infty
%\]
%then $W$ has sample paths unbounded in every subinterval of $[0,1]$ almost surely
%\end{theorem}
In case of Gaussian fields, recent progress on modulus of continuity
of Gaussian random fields has been reported in \cite{xiao,
ayache-xiao,MWX13}.

Let $W$ be a centered Gaussian random field with covariance function
\begin{equation}
\EE\left[W(x)W(y)\right]=Q(x,y).\label{cond:cov-1}
\end{equation}
We will always assume that $Q$ is a continuous function of $x$ and
$y$. For any fixed $x,y$, the random variable $\square_{y}^{n}W(x)$ is
also Gaussian with mean zero. In the following proposition, we compute its
variance.
\begin{prop}\label{prop:secondmoment}
Let $W$ be a centered Gaussian random field with covariance function
given by (\ref{cond:cov-1}). Then
\begin{equation}
\EE\left[|\square_{y}^{n}W(x)|^{2}\right]=\square_{(y,y)}^{2n}Q(x,x).\label{eq:secondmoment}
\end{equation}
Furthermore, if the covariance function $Q$ has  the
following product form
\begin{equation}
Q(x,y)=\prod_{k=1}^{n}Q_{k}(x_{k},y_{k})\, \label{cond:cov}
\end{equation}
then \eqref{eq:secondmoment} is simplified as
\begin{equation}
\EE\left[|\square_{y}^{n}W(x)|^{2}\right]=\prod_{k=1}^{n}\left[Q_{k}(x_{k},x_{k})-Q_{k}(x_{k},y_{k})-Q_{k}(y_{k},x_{k})+Q_{k}(y_{k},y_{k})\right].\label{eq:secondmoment2}
\end{equation}
\end{prop}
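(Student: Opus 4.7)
The plan is to expand $\square_y^n W(x)$ as an alternating sum of values of $W$ at the corners of the rectangle with opposite vertices $x$ and $y$, apply bilinearity of the covariance to its square, and then recognize the resulting double sum as a $2n$-dimensional box increment of $Q$.

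First I would exploit the commutativity $V_{k,y}V_{l,y}=V_{l,y}V_{k,y}$ already recorded in the paper to expand the product defining $\square_y^n$:
\begin{equation*}
\square_y^n W(x)=\prod_{k=1}^n(I-V_{k,y})W(x)=\sum_{S\subseteq\{1,\ldots,n\}}(-1)^{|S|}W(V_{S,y}x),
\end{equation*}
where $V_{S,y}x$ denotes the point whose $k$-th coordinate equals $y_k$ for $k\in S$ and $x_k$ otherwise. Squaring and taking expectation, bilinearity of $Q(\cdot,\cdot)=\EE[W(\cdot)W(\cdot)]$ gives
\begin{equation*}
\EE\bigl[|\square_y^n W(x)|^2\bigr]=\sum_{S,T\subseteq\{1,\ldots,n\}}(-1)^{|S|+|T|}Q(V_{S,y}x,V_{T,y}x).
\end{equation*}

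Next I would view $Q$ as a function on $\RR^{2n}$ by concatenating its two $n$-dimensional arguments. Any $R\subseteq\{1,\ldots,2n\}$ decomposes uniquely as $R=S\cup(n+T)$ with $S,T\subseteq\{1,\ldots,n\}$, and the replacement of coordinates of the doubled point $(x,x)$ by the corresponding coordinates of $(y,y)$ indexed by $R$ produces exactly the point $(V_{S,y}x,V_{T,y}x)$. Applying the same subset expansion in dimension $2n$ therefore identifies the double sum above with $\square_{(y,y)}^{2n}Q(x,x)$, which gives \eqref{eq:secondmoment}.

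Finally, for the product covariance $Q(u,v)=\prod_k Q_k(u_k,v_k)$, I would regroup the $2n$ factors $(I-V_{j,(y,y)})$ into $n$ commuting pairs, the $k$-th pair acting on the coordinates $(z_k,z_{n+k})$. The tensor-product identity from the example preceding the proposition makes the resulting operator factor over $k$, and each factor $(I-V_{k,y})(I-V_{n+k,y})$ applied to $Q_k(z_k,z_{n+k})$ and evaluated at $z_k=z_{n+k}=x_k$ produces exactly the bracket in \eqref{eq:secondmoment2}. The only real obstacle is notational bookkeeping, namely keeping the index splitting $\{1,\ldots,2n\}$ into $\{1,\ldots,n\}$ and $\{n+1,\ldots,2n\}$ consistent throughout; no new analytic input beyond bilinearity and the multilinearity of $\square$ is required.
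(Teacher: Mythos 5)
Your proof is correct and follows essentially the same route as the paper: the paper writes $\EE[\square_y^n W(x)\cdot\square_y^n W(x)]=\square_{(y,y)}^{2n}\EE[W(x)W(x)]$ directly at the operator level, which is exactly your subset expansion and bilinearity argument in compressed form, and your treatment of the product case via the pairs $(I-V_{k,(y,y)})(I-V_{n+k,(y,y)})$ coincides with the paper's. The explicit bookkeeping with $S,T\subseteq\{1,\dots,n\}$ and $R=S\cup(n+T)$ is sound and adds nothing beyond making the paper's one-line tensorization step transparent.
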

\begin{proof}
We calculate the variance directly as follows
\begin{eqnarray*}
\EE\left[|\square_{y}^{n}W(x)|^{2}\right] & = & \EE\left[\square_{y}^{n}W(x)\cdot\square_{y}^{n}W(x)\right]\\
 & = & \EE\left[\square_{(y,y)}^{2n}W(x)W(x)\right]\\
 & = & \square_{(y,y)}^{2n}\EE\left[W(x)W(x)\right]\\
 & = & \square_{(y,y)}^{2n}Q(x,x).
\end{eqnarray*}
The identity  \eref{eq:secondmoment} follows. To prove \eqref{eq:secondmoment2}, we notice that the pair of operators $(I-V_{k,(y,y)})(I-V_{n+k,(y,y)})$ transforms the $k$-th factor of $Q$ in \eqref{cond:cov} to
\[Q_{k}(x_{k},x_{k})-Q_{k}(x_{k},y_{k})-Q_{k}(y_{k},x_{k})+Q_{k}(y_{k},y_{k}).
\]Since the operators $I-V_{k,(y,y)}$, $(1\le k\le 2n)$ are commutative, we can write
\begin{eqnarray*}\square_{(y,y)}^{2n}Q(x,x)& = &\prod_{k=1}^n(I-V_{k,(y,y)})(I-V_{n+k,(y,y)})Q(x,x)\\
& = & \prod_{k=1}^{n}\left[Q_{k}(x_{k},x_{k})-Q_{k}(x_{k},y_{k})-Q_{k}(y_{k},x_{k})+Q_{k}(y_{k},y_{k})\right].
\end{eqnarray*}Hence, the identity \eqref{eq:secondmoment2} follows.
\end{proof}

%The above formula for the  variance of $\square_{y}^{n}W(x)$ can be
%simplified if the covariance of $W$ is the product of
%functions of each coordinate. It is straightforward to prove
%\begin{cor}
%\label{prop:secondmoment}Let  the covariance function $Q$ have  the
%following product form
%\begin{equation}
%Q(x,y)=\prod_{k=1}^{n}Q_{k}(x_{k},y_{k})\,. \label{cond:cov}
%\end{equation}
%where $Q_{k}$'s are some positive definite function on
%$\mathbb{R}^{2}$.
%Then
%\begin{equation}
%\EE\left[|\square_{y}^{n}W(x)|^{2}\right]=\prod_{k=1}^{n}\left[Q_{k}(x_{k},x_{k})-Q_{k}(x_{k},y_{k})-Q_{k}(y_{k},x_{k})+Q_{k}(y_{k},y_{k})\right].\label{eq:secondmoment2}
%\end{equation}
%\end{cor}
\begin{defn}
Let $f$ be a continuous function on $\RR^{n}$. We call a set of
non-negative even functions $\{p_{1},\dots,p_{n}\}$ joint modulus of
continuity of $f$ if
\begin{description}
\item{(i)} For each $1\le k\le n$, $p_{k}(0)=0$, and $p_{k}$ is
non-decreasing and continuous.
\item{
(ii)}  For every pair $(s,t)$ in $\RR^{2n}$, the following
inequality holds
\[
|\square_{s}^{m}f(t)|\le\prod_{k=1}^{n}p_{k}(|t_{k}-s_{k}|)\,.
\]
\end{description}
\end{defn}
In view of Theorem  \ref{lem:grrn} and Theorem \ref{thm:Kolmogorov},
the continuity of sample paths is governed by the joint modulus of
continuity of $\square_{(y,y)}^{2n}Q(x,x)$. Such modulus of
continuity always exists. For instance, we can define a joint
modulus of continuity for $\square_{(y,y)}^{2n}Q(x,x)$ as follows.
We set
\[
p_{1}(u)=\sup_{x,y\in[0,1]^{n}:|x_{1}-y_{1}|\le u}\left[\square_{(y,y)}^{2n}Q(x,x)\right]^{\frac{1}{2}}.
\]
Given $p_{1},\dots,p_{k-1}$, define
\[
p_{k}(u)=\sup_{x,y\in[0,1]^{n}:|x_{k}-y_{k}|\le u}\frac{\left[\square_{(y,y)}^{2n}
Q(x,x)\right]^{\frac{1}{2}}}{\prod_{j=1}^{k-1}p_{j}(|x_{j}-y_{j}|)}\,,
\]
in which we have adopted the convention $0/0=0$. It follows immediately
that $p_{k}$'s are non-decreasing and continuous. Furthermore, we have
 $p_{k}(0)=0$
and
\begin{equation}
\square_{(y,y)}^{2n}Q(x,x)\le\prod_{k=1}^{n}p_{k}^{2}(|x_{k}-y_{k}|)\,.
\label{e.2.17}
\end{equation}
Namely,  $\{p_{1},p_{1},p_{2},p_{2},\dots,p_{n},p_{n}\}$ is a
modulus of continuity for $\square_{(y,y)}^{2n}Q(x,x)$.  We also
call $\{p_1, \cdots, p_n\}$ a modulus of continuity for
$\square_{(y,y)}^{2n}Q(x,x)$.

In the following theorem, we give a sufficient condition for almost
sure joint continuity of a Gaussian random field.
\begin{thm}
\label{thm:contW} Let $W$ be a continuous centered Gaussian random field with
covariance function given by (\ref{cond:cov-1}), and $p_{k}$ $(1\le
k\le n)$ be a modulus of continuity for
$\square_{(y,y)}^{2n}Q(x,x)$, namely the inequality
\eref{e.2.17} is satisfied. Suppose that
\begin{equation}\label{cond:p}
\sum_{k=1}^n \int_0^1\left(\log\frac1u\right)^{\frac12}dp_k(u)<\infty.
%\int_{[0,1]^{n}}\left(\log\frac{1}{u_{1}\cdots u_{n}}\right)^{\frac{1}{2}}
%dp_{1}(u_{1})\cdots dp_{n}(u_{n})<\infty.
\end{equation}
Then, with probability one $W$ has joint continuous sample path. Furthermore, we have almost surely
that for any $\delta>0$,
\begin{equation}\label{eq:WLIL}
	 \sup_{0\le |x-y|\le \delta }\frac{|\square^n_yW(x)|}{h(x,y)}\le c_{n, \delta} \,,
\end{equation}
where $h(x,y)$ is the function
\begin{equation}
	h(x,y)={\prod_{k=1}^np_k(|x_k-y_k|)}\sqrt{\log \prod_{j=1}^n{\frac1{|x_j-y_j|}}}\,,
\end{equation}
$c_{n, \delta}$ is a random variable, depending on $n$ and $\de$, and $\EE e^{c_{n,\delta}^2}<\infty $ . Moreover, there exists a constant $\kappa_n$ such that
\begin{equation}\label{ineq.WLIL2}
	\lim_{\delta\downarrow 0}\sup_{|x-y|\le \delta}\frac {|\square_y^n W(x)|}{h(x,y)}\le \kappa_n
\end{equation}
almost surely.

\end{thm}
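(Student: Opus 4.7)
The plan is to feed Theorem~\ref{lem:grrn} with the exponential Young function $\Psi(u)=e^{u^{2}/\lambda}-1$, where $\lambda>2$ is a parameter to be chosen large enough, so that $\Psi^{-1}(v)=\sqrt{\lambda\log(1+v)}$, together with the prescribed joint modulus $\{p_{k}\}$ of $\square_{(y,y)}^{2n}Q(x,x)$. By Proposition~\ref{prop:secondmoment} and the defining bound \eqref{e.2.17}, the random variable $\square_{y}^{n}W(x)$ is centered Gaussian with variance at most $\prod_{k}p_{k}^{2}(|x_{k}-y_{k}|)$; the elementary identity $\EE e^{Z^{2}/\lambda}=(1-2/\lambda)^{-1/2}$ for $Z\sim\mathcal N(0,1)$ therefore yields
\[
\EE\Psi\!\left(\frac{|\square_{y}^{n}W(x)|}{\prod_{k}p_{k}(|x_{k}-y_{k}|)}\right)\le(1-2/\lambda)^{-1/2}-1.
\]
Integrating over $(x,y)\in[0,1]^{2n}$ shows that $B(\omega):=\iint\Psi(\cdot)\,dx\,dy$ is integrable; since $\lambda$ can be taken arbitrarily large, in fact $B\in L^{q}(\Omega)$ for every $q<\infty$.

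Theorem~\ref{lem:grrn} now gives, with $\delta_{k}=|s_{k}-t_{k}|$,
\[
|\square_{s}^{n}W(t)|\le 8^{n}\int_{0}^{\delta_{1}}\!\cdots\!\int_{0}^{\delta_{n}}\sqrt{\lambda\log\!\left(1+\frac{4^{n}B}{u_{1}^{2}\cdots u_{n}^{2}}\right)}\,dp_{1}(u_{1})\cdots dp_{n}(u_{n}).
\]
The technical heart of the argument is to split the logarithm. Iterating the elementary inequality $\sqrt{\log(1+ab)}\le\sqrt{\log 2}+\sqrt{\log(1+a)}+\sqrt{\log(1+b)}$ (which follows from $1+ab\le 2(1+a)(1+b)$ together with subadditivity of $\sqrt{\cdot}$) separates the integrand into a $\sqrt{\log(1+4^{n}B)}$-piece and $n$ pieces each bounded by a constant multiple of $\sqrt{\log(1/u_{k})}+1$ on $(0,1)$. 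The first piece integrates to $\prod_{k}p_{k}(\delta_{k})\sqrt{\log(1+B)}$ up to constants, and an integration by parts gives
\[
\int_{0}^{\delta_{k}}\sqrt{\log(1/u)}\,dp_{k}(u)\le p_{k}(\delta_{k})\sqrt{\log(1/\delta_{k})}+\int_{0}^{\delta_{k}}\frac{p_{k}(u)}{2u\sqrt{\log(1/u)}}\,du,
\]
whose second summand is finite and subdominant relative to the first by the hypothesis \eqref{cond:p} (equivalent via integration by parts to the Fernique-type integrability $\int_{0}^{1}p_{k}(u)/(u\sqrt{\log(1/u)})\,du<\infty$). Combining all contributions and applying the Cauchy--Schwarz inequality $\sum_{k}\sqrt{\log(1/\delta_{k})}\le\sqrt{n\sum_{k}\log(1/\delta_{k})}$ produces precisely the factor $h(s,t)$ appearing in \eqref{eq:WLIL}, with
\[
c_{n,\delta}\le C_{n}\bigl(\sqrt{\log(1+B)}+1\bigr).
\]
The required exponential integrability $\EE e^{c_{n,\delta}^{2}}<\infty$ then follows from $e^{c_{n,\delta}^{2}}\le C_{n}'(1+B)^{C_{n}^{2}}$ together with the polynomial moments of $B$ obtained above.

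For the refined statement \eqref{ineq.WLIL2}, in which the limiting constant $\kappa_{n}$ must be deterministic and independent of $B$, I would pass to dyadic scales $\delta_{m}=2^{-m}$ and invoke Gaussian concentration. The Borell--TIS inequality applied to the centered Gaussian family $\{\square_{y}^{n}W(x):|x-y|\le\delta_{m}\}$ shows that $\sup|\square_{y}^{n}W(x)|/h(x,y)$ has sub-Gaussian tails around its mean, while the mean itself tends, as $m\to\infty$, to a universal numerical constant $\kappa_{n}$ depending only on $n$ and on the exponential Young function fixed above. A Borel--Cantelli summation of the resulting Gaussian tails over $m$ then delivers \eqref{ineq.WLIL2}. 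I expect the main obstacle to lie in the second step above: every remainder produced by splitting the logarithm must be shown to be subordinate to $h(s,t)$, which forces one to track both the $B$-dependence and the behaviour as $\delta_{k}\downarrow 0$ tightly enough to preserve the exponential integrability of $c_{n,\delta}$.
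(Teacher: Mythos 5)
Your main line of attack is exactly the paper's: apply Theorem~\ref{lem:grrn} with an exponential--square Young function, use Proposition~\ref{prop:secondmoment} together with \eqref{e.2.17} to see that the normalized increment is Gaussian with variance at most one so that $B(\omega)$ is integrable, and then split $\Psi^{-1}\bigl(4^{n}B/(u_{1}^{2}\cdots u_{n}^{2})\bigr)$ by subadditivity of the square root into a $\sqrt{\log B}$--term carrying the factor $\prod_{k}p_{k}$ and a deterministic term whose integral is comparable to $h$. (The paper takes $\Psi(u)=e^{u^{2}/4}$ and computes $\EE e^{N^{2}/4}\le 15/14$; your $e^{u^{2}/\lambda}-1$ is an immaterial variant, and your handling of the $\sqrt{\log(1/u_k)}$ integrals mirrors the paper's ``elementary'' limit $\lim_{|x-y|\to0}h(x,y)^{-1}\int_{0}^{|x_{1}-y_{1}|}\cdots\int_{0}^{|x_{n}-y_{n}|}(\log\frac{1}{u_{1}^{2}\cdots u_{n}^{2}})^{1/2}\,dp_{1}\cdots dp_{n}=c_{n}$, which in both write-ups tacitly uses a mild regularity of the $p_k$ beyond \eqref{cond:p}.)

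Two points where your argument goes astray. First, the Borell--TIS/Borel--Cantelli machinery you invoke for \eqref{ineq.WLIL2} is both unnecessary and, as written, a genuine gap: you assert without proof that the expectation of the normalized supremum converges to a universal constant, which is the hard (and unproved) part of that route. In fact \eqref{ineq.WLIL2} with a deterministic $\kappa_{n}$ already follows from the GRR bound you derived, because the coefficient of the random term is $\prod_{k}p_{k}(|x_{k}-y_{k}|)/h(x,y)=\bigl(\sum_{j}\log\frac{1}{|x_{j}-y_{j}|}\bigr)^{-1/2}\to0$ as $\delta\downarrow0$, so only the deterministic main term survives in the limit; this is precisely how the paper concludes. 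Second, your justification of $\EE e^{c_{n,\delta}^{2}}<\infty$ by ``taking $\lambda$ large so that $B$ has all moments'' is circular: $\Psi^{-1}$ carries the factor $\sqrt{\lambda}$, so $c_{n,\delta}^{2}$ contains a term of order $\lambda\,\log(1+B_{\lambda})$ while $B_{\lambda}$ only has finite moments up to order roughly $\lambda/2$; the two factors of $\lambda$ cancel and enlarging $\lambda$ buys nothing. What actually makes the exponential integrability work (and is what the paper implicitly relies on) is the smallness of the coefficient $\prod_{k}p_{k}/h\le (n\log(1/\delta))^{-1/2}$ in front of $\sqrt{\log(1+B)}$ for small $\delta$, so that $e^{c_{n,\delta}^{2}}$ is controlled by a power of $B$ strictly below the available moment order. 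With these two repairs your proposal coincides with the paper's proof.
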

\begin{proof}
We set $\Psi(x)=e^{x^{2}/4}$ and
\[
B(\omega)=\iint_{[0,1]^{2n}}\exp\left[\frac{|\square_{y}^{n}W(x)|^{2}}{4\prod_{k=1}^{n}p_{k}^{2}(|x_{k}-y_{k}|)}\right]dxdy.
\]
Theorem \ref{lem:grrn}   gives
\begin{eqnarray}\label{eq:Wcont}
	|\square_{y}^{n}W(x)|&\le&2\cdot8^{n}\int_{0}^{|x_{1}-y_{1}|}\cdots
	\int_{0}^{|x_{n}-y_{n}|}\left(\log\frac{1}{u_{1}^{2}\cdots u_{n}^{2}}\right)
	^{\frac{1}{2}}dp_{1}(u_{1})\cdots dp_{n}(u_{n})\nonumber\\
	&&\quad +\sqrt {\log(4^{n}B(\omega))}\prod_{k=1}^n p_k(|x_k-y_k|)
\end{eqnarray}
for $\omega$ such that $B(\omega)$ is finite.

It is elementary to see that
\begin{equation*}	
		\lim_{|x-y|\to0}\frac1{h(x,y)} \int_{0}^{|x_{1}-y_{1}|}\cdots \int_{0}^{|x_{n}-y_{n}|}\left(\log\frac{1}{u_{1}^{2}\cdots u_{n}^{2}}\right)
	^{\frac{1}{2}}dp_{1}(u_{1})\cdots dp_{n}(u_{n})=c_n
\end{equation*}	
for some constant $c_n$ and
\begin{equation*}
	\lim_{|x-y|\to0}\frac{\prod_{k=1}^n p_k(|x_k-y_k|)}{h(x,y)}=0\,.
\end{equation*}
From \eqref{eq:Wcont} and the two facts above, the estimates \eqref{eq:WLIL} and \eqref{ineq.WLIL2} follow easily.

To see \eqref{eq:Wcont} indeed
holds for almost every $\omega$ (and hence \eqref{eq:WLIL} and \eqref{ineq.WLIL2}), it is sufficient to show that $B$
has finite expectation. We notice that the random variable
\[
N=\frac{\square_{y}^{n}W(x)}{\prod_{k=1}^{n}p_{k}(|x_{k}-y_{k}|)}
\]
is Gaussian, has mean zero and variance less than or equal to one.
Thus, an application of Stirling's formula gives
\begin{eqnarray*}
\EE\exp\left(\frac{N^{2}}{4}\right) & = & \sum_{k=0}^{\infty}\frac{\EE N^{2k}}{4^{k}k!}\\
 & = & 1+\sum_{k=1}^{\infty}\frac{(2k)!}{8^{k}\left(k!\right)^{2}}(\EE N^{2})^k\\
 & \le & 1+\frac{1}{2}\sum_{k=1}^{\infty}8^{-k}=\frac{15}{14}.
\end{eqnarray*}
Hence
\[
\EE B=\iint_{[0,1]^{2n}}\EE\exp\left(\frac{N^{2}}{4}\right)dxdy\le\frac{15}{14}
\]
and the proof is complete.
\end{proof}
\begin{rem}\label{rem:1} Suppose that $W(x)=0$ whenever $x$ has at least one zero coordinate. Let $\sigma(x,y)$ be the function defined below
	\begin{equation}\label{e.sigma} 
		\sigma(x,y)=\sum_{k=1}^n \left(\prod_{j\neq k}p_j(|z_{j,k}|)\right)\left|\log \prod_{j\neq k}|z_{j,k}|\right|^{1/2}p_k(|x_k-y_k|) |\log|x_k-y_k||^{1/2}\,,
	\end{equation}
	where $z_{j,k}=x_j$ if $j<k$ and $z_{j,k}=y_j$ if $j>k$.
	The inequality \eqref{ineq.WLIL2} implies the following estimate which usually appears in literature
	\begin{equation}\label{eq:WLIL2}
		\lim_{\delta\downarrow0}\sup_{\substack{|x|\le1,|y|\le1\\|x-y|\le \delta}}\frac {|W(x)-W(y)|}{\sigma(x,y)}\le \kappa_n\,.
	\end{equation}
%	\begin{equation}\label{e.sigma} 
%		\sigma(x,y)=\sum_{k=1}^n p_k(|x_k-y_k|) |\log|x_k-y_k||^{\frac12}\,.
%	\end{equation}
	Indeed, fix $\omega$ such that \eqref{ineq.WLIL2} holds and $\delta$ sufficiently small, for every $x,y$ in $[0,\delta]^n$, with $x$ and $(0,0,\dots,0,y_n)$, the estimate \eqref{eq:WLIL} gives the following estimate for the increment along an edge of the $n$-dimensional rectangle $[x_1,y_1]\times\cdots\times[x_n,y_n]$
	\begin{multline*}\left|W(x_1,\cdots,x_n)-W(x_1,\cdots,x_{n-1},y_n)\right|\\
		\le c_{n,\delta}\left(\prod_{k=1}^{n-1}p_k(|x_k|)\right)\left|\log{\prod_{k=1}^{n-1}|x_k|}\right|^{1/2}p_n(|x_n-y_n|)\left|\log
		|x_n-y_n|\right|^{1/2}\,.
	\end{multline*}
	Similarly, we can obtain analogue estimates along any edge of the $n$-dimensional rectangle $[x_1,y_1]\times\cdots\times[x_n,y_n]$. The increment along the diagonal is majorized by the total increments along all the edges connecting $x$ and $y$. Hence, this argument yields the following estimate
	\begin{equation}\left|W(x)-W(y)\right|\le c_{n,\delta} \sigma(x,y)
	\end{equation}
	which implies \eqref{eq:WLIL2}.

\end{rem}
%\begin{rem} By the result of Jain-Marcus \cite{jain-marcus}, if a Gaussian process $X(t)$
%\end{rem}

%Following the idea of Theorem \ref{thm:jain-marcus}, we show that the condition \eqref{cond:p} is necessary in the following sense
%\begin{theorem}Let $W$ be a centered Gaussian random field such that
%\begin{equation}\EE [\square_t^n W(s)]^2\ge \prod_{k=1}^np_k^2(|s_k-t_k|)
%\end{equation}
%where $p_k$'s are increasing functions on $[0,1]$. Suppose there exist $k$ such that
%\[\int_0^1 \sqrt{\log\frac1u}dp_k(u)=\infty
%\]
%Then, W has sample paths unbounded in $[0,1]^n$ almost surely.\end{theorem}
%\begin{proof}It is easy to see that the condition \eqref{cond:p} is equivalent to the following condition
%\begin{equation}\int_0^1\sqrt{\log\frac1u}dp_k(u)<\infty\quad\text{for all } 1\le k\le n\,.
%\end{equation}
%\end{proof}

As an application of the above theorem, we  obtain joint continuity
for sample paths of fractional Brownian field, as mentioned in
\eref{e.1.5}. 
\begin{cor}\label{cor:fbm}
Let $W^{H}$ be a fractional Brownian field on $\RR^{n}$ with Hurst
parameter $H=(H_{1},\dots,H_{n})$. Then, for any $\de>0$ the following inequality holds almost surely
\begin{equation}\label{e.4.7}
	 \sup_{|x-y|\le\delta}\frac{|\square_y^nW^H(x)|}{h^H(x,y)}\le c_{n,\de}
\end{equation}
where $h^H(x,y)$ is the function
\begin{equation}
	h^H(x,y)=\prod_{k=1}^n |x_k-y_k|^{H_k}\left|\log \prod_{j=1}^n|x_j-y_j|\right|^{1/2}
\end{equation}
for some finite random variable $c_{n,\de} $ depending on $n$ and $\de$ such that $\EE e^{c_{n,\delta}^2}<\infty$. Moreover, there is a constant $\kappa_n$ such that
 \begin{equation}
 	\lim_{\delta\downarrow0}\sup_{|x-y|\le \delta}\frac{|\square_y^nW^H(x)|}{h^H(x,y)}\le \kappa_n
 \end{equation}
 almost surely.
\end{cor}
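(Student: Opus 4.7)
The plan is to derive Corollary \ref{cor:fbm} by verifying the hypotheses of Theorem \ref{thm:contW} for the fractional Brownian field $W^H$. The only real work is identifying the right moduli $p_k$; after that, the conclusion is immediate from the theorem.

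First, I would recall that the $n$-parameter fractional Brownian field with Hurst vector $H = (H_1,\dots,H_n)$ is by definition the centered Gaussian field whose covariance factors as
\[
Q(x,y) = \prod_{k=1}^{n} Q_k(x_k, y_k), \qquad Q_k(u,v) = \tfrac{1}{2}\bigl(|u|^{2H_k} + |v|^{2H_k} - |u-v|^{2H_k}\bigr).
\]
This puts $Q$ in the product form (\ref{cond:cov}), so Proposition \ref{prop:secondmoment} is applicable. A direct one-line computation of each factor gives
\[
Q_k(x_k,x_k) - Q_k(x_k,y_k) - Q_k(y_k,x_k) + Q_k(y_k,y_k) = |x_k - y_k|^{2H_k},
\]
and therefore, by \eqref{eq:secondmoment2},
\[
\EE\bigl[|\square_{y}^{n} W^H(x)|^{2}\bigr] = \prod_{k=1}^{n} |x_k - y_k|^{2H_k}.
\]

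Second, this identity shows that the functions $p_k(u) = u^{H_k}$ form an admissible joint modulus of continuity for $\square_{(y,y)}^{2n} Q(x,x)$ in the sense of \eqref{e.2.17}. To invoke Theorem \ref{thm:contW}, I would check the integrability condition \eqref{cond:p}: since $H_k > 0$,
\[
\int_0^1 \Bigl(\log\tfrac{1}{u}\Bigr)^{1/2} dp_k(u) = H_k \int_0^1 u^{H_k-1} \Bigl(\log\tfrac{1}{u}\Bigr)^{1/2} du < \infty,
\]
which is a standard gamma-type integral. Summing over $k = 1,\dots,n$ gives \eqref{cond:p}.

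Third, applying Theorem \ref{thm:contW} with these $p_k$'s yields directly the estimates (\ref{e.4.7}) and the almost-sure limit, since the function $h(x,y)$ of the theorem specializes to $h^H(x,y) = \prod_k |x_k-y_k|^{H_k} \sqrt{|\log \prod_j |x_j - y_j||}$, with the identification $\log(1/a) = |\log a|$ for $a \in (0,1]$. The random variable $c_{n,\delta}$ and the integrability $\EE e^{c_{n,\delta}^2} < \infty$ are inherited from the theorem's conclusion.

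There is no real obstacle here: the corollary is essentially a worked example. The only point to be careful about is the explicit arithmetic identity $Q_k(x_k,x_k) - Q_k(x_k,y_k) - Q_k(y_k,x_k) + Q_k(y_k,y_k) = |x_k-y_k|^{2H_k}$, which is the expected one-parameter fact that an fBm of Hurst parameter $H_k$ has variance increments $|x_k-y_k|^{2H_k}$; the product structure of $Q$ then lifts this to the $n$-parameter joint increment via Proposition \ref{prop:secondmoment}.
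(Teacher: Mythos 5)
Your proposal is correct and follows essentially the same route as the paper: identify the product-form covariance, use Proposition \ref{prop:secondmoment} to get $\EE|\square_y^n W^H(x)|^2=\prod_k|x_k-y_k|^{2H_k}$, take $p_k(u)=u^{H_k}$, and invoke Theorem \ref{thm:contW}. Your explicit verification of the integrability condition \eqref{cond:p} is a detail the paper leaves implicit, but otherwise the arguments coincide.
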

\begin{proof}
The covariance function of a fractional Brownian field is given by
\[
\EE\left[W^{H}(x)W^{H}(y)\right]=\prod_{k=1}^{n}R_{k}(x_{k},y_{k}),
\]
where
\[
R_{k}(s,t)=\frac{1}{2}\left[|s|^{2H_{k}}+|t|^{2H_{k}}-|s-t|^{2H_{k}}\right]\,,
\quad \forall \ s, t\in \RR\,.
\]
By Proposition \ref{prop:secondmoment}, we obtain the second moment
for $\square_{y}^{n}W^{H}(x)$
\[
\EE|\square_{y}^{n}W^{H}(x)|^{2}=\prod_{k=1}^{n}|x_{k}-y_{k}|^{2H_{k}}.
\]
This means that $p_i(u)=u^{H_i}\,, i=1, 2, \cdots\,, n$ are the
modulus of continuity of $\square_{(y,y)}^{2n}Q(x,x)$. Now the corollary is a direct consequence of Theorem
\ref{thm:contW}.
\end{proof}
\begin{rem}
\begin{enumerate}
	\item In case of $n$-parameter Wiener process, that is when $H_1=\cdots=H_n=1/2$, the above corollary is comparable to a result of S. Orey and W. E. Pruitt in \cite[Theorem 2.1]{Orey-Pruitt}.
	\item As in Remark \ref{rem:1}, let $\sigma^H(x,y)$ be the function
	\begin{equation}\label{sigmaH}
		\sigma^H(x,y)=\sum_{k=1}^n \left(\prod_{j\neq k}|z_{j,k}|^{H_j}\right)\left|\log \prod_{j\neq k}|z_{j,k}|\right|^{\frac12}|x_k-y_k|^{H_k} |\log|x_k-y_k||^{\frac12}\,,
	\end{equation} 
	where $z_{j,k}=x_j$ if $j<k$ and $z_{j,k}=y_j$ if $j>k$.
	Then the previous result implies the following
	\begin{equation}
		 \lim_{\delta\downarrow0} \sup_{\substack{|x|\le1,|y|\le1\\|x-y|\le \delta}}\frac{|W^H(x)-W^H(y)|}{\sigma^H(x,y)}\le c_{n}
	\end{equation}
	where $c_n$ is some constant.
\end{enumerate}
%	\begin{equation}\label{sigmaH}
%		\sigma^H(x,y)=\sum_{k=1}^n |x_k-y_k|^{H_k}|\log|x_k-y_k||^{\frac12}\,.
%	\end{equation}
\end{rem}
%\begin{rem}\label{rem:ayache-xiao}
%In \cite{ayache-xiao}, it is shown that for fractional Brownian field
%$W^{H}$ on $\RR^{n}$, there exist a random variable $A_{1}=A_{1}(\omega)>0$
%of finite moments of any order and an event $\Omega_{1}^{*}$ of probability
%1 such that for any $\omega\in\Omega_{1}^{*}$,
%\[
%\sup_{s,t\in[0,1]^{n}}\frac{|W^{H}(s,\omega)-W^{H}(t,\omega)|}{\sum_{j=1}^{n}|s_{j}-t_{j}|^{H_{j}}\sqrt{\log\left(3+|s_{j}-t_{j}|^{-1}\right)}}\le A_{1}(\omega)\,.
%\]
%Our method leads to a slightly stronger estimate. Indeed, for every $s,t$ in $[0,1/2]^n$, with $s$ and $(0,0,\dots,0,t_n)$, the estimate \eref{e.4.7} gives the increment along an edge of the $n$-dimensional rectangle $[s_1,t_1]\times\cdots\times[s_n,t_n]$
%\begin{multline*}\left|W^H(s_1,\cdots,s_n)-W^H(s_1,\cdots,s_{n-1},t_n)\right|\\
%\le A_1(\omega)\left(\prod_{k=1}^{n-1}s_k^{H_k}\right)\left|\log{\prod_{k=1}^{n-1}s_k}\right|^{1/2}|s_n-t_n|^{H_n}\left|\log
%|s_n-t_n|\right|^{1/2}.
%\end{multline*}
%Similarly, we can obtain analogue estimates along any edge of the $n$-dimensional rectangle $[s_1,t_1]\times\cdots\times[s_n,t_n]$. The increment along the diagonal is majorized by the total increments along all the edges connecting $s$ and $t$. Hence, this argument yields the following estimate
%\begin{equation}\left|W^H(s)-W^H(t)\right|\le A_1(\omega)\sum_{k=1}^n\left(\prod_{j\neq k}s_j^{H_j}\right) \left|\log
%\prod_{j\neq k}{s_j}\right|^{1/2}|s_k-t_k|^{H_k}|\log{|s_k-t_k|}|^{1/2}.
%\end{equation}
%\end{rem}

\setcounter{equation}{0}
\section{Stochastic heat equations with additive space time white noise}\label{sec:spde}

In this section let us consider the following one dimensional
stochastic differential equation
\begin{equation}
\left\{
\begin{array}{ll}
\frac{\partial u}{\partial t}=\frac12\Delta u +  \dot W& \qquad 0<t\le T\,, \ y\in \RR\\ \\
u(0, y)=0 &\qquad y\in \RR\,,
\end{array}\right.
\end{equation}
where $\Delta u=\frac{\partial ^2}{\partial y^2} u$, $W$ is space
time   standard Brownian sheet,  and $\dot W=\frac{\partial
^2}{\partial t\partial y} W$. Let $\displaystyle
p_t(y)=\frac{1}{\sqrt{2\pi t}}e^{-\frac{y^2}{2t}}$.  Then the (mild)
solution of the above equation is given by
\[
u(t, y)=\int_0^t \int_\RR p_{t-r} (y-z) W(dr, dz)\,,
\]
where the above integral is the usual (It\^o) stochastic integral
(however, the integrand is simple. It is a deterministic function).
The solution $u(t, y)$  is a Gaussian random field. It is known that
$u(t, y)$ is H\"older continuous of exponent $\frac14-$ for time
parameter and $\frac12-$ for space parameter.  Namely, for any
$\al<1/4$ and any $\be<1/2$,  there is a random constant $C_{\al,
\be}$ such that
\begin{equation}\label{ineq:uholder}
|u(t, y)-u(s, x)|\le C_{\al, \be}
\left(|t-s|^{\al}+|x-y|^{\be}\right)\,.
\end{equation}

We are interested in the  joint H\"older continuity of the solution
$u(t,y)$. We need the   following simple technical lemma.
  \begin{lem}
\label{lem:iabexp}Let $a, b,\delta$ be some positive numbers,  where $a<b$,  and let
$I,J$ be the integrations
\[
I=\int_{a}^{b}\frac{1}{\sqrt{r}}\left(1-e^{-\frac{\delta^2}{2r}}\right)dr,
\]
\[J=\int_{0}^{a}\frac{1}{\sqrt{r}}\left(1-e^{-\frac{\delta^2}{2r}}\right)dr.
\]
Then for every $\alpha\in[0,1/2]$,
\[
2(\sqrt{b}-\sqrt{a})\left(1-e^{-\frac{\delta^2}{2b}}\right)\le I\le2(\sqrt{b}-\sqrt{a})\left(1-e^{-\frac{\delta^2}{2a}}\right)
\]
and
\[J\le c_\alpha \delta^{2\alpha}a^{1/2-\alpha}.
\]
\end{lem}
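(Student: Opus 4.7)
The plan is to prove the two inequalities separately. For $I$, the key observation is that the factor $1 - e^{-\delta^2/(2r)}$ is monotone in $r$, so it can be pulled outside as a constant bound. For $J$, I would use the elementary interpolation inequality $1 - e^{-x} \le x^\alpha$ valid for $\alpha \in [0, 1]$ and $x \ge 0$, then integrate explicitly, with the endpoint $\alpha = 1/2$ handled by a separate change of variables.

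For the bound on $I$: observe that on $(0, \infty)$, the map $r \mapsto 1 - e^{-\delta^2/(2r)}$ is decreasing, since $r \mapsto \delta^2/(2r)$ is decreasing and $x \mapsto 1 - e^{-x}$ is increasing. Hence on $[a, b]$ this factor lies between its value $1 - e^{-\delta^2/(2b)}$ at $r = b$ and its value $1 - e^{-\delta^2/(2a)}$ at $r = a$. Pulling the appropriate constant outside the integral and using $\int_a^b r^{-1/2}\,dr = 2(\sqrt{b} - \sqrt{a})$ immediately produces the claimed two-sided estimate.

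For the bound on $J$: I would first establish that $1 - e^{-x} \le x^\alpha$ for every $\alpha \in [0, 1]$ and every $x \ge 0$, by writing
\[
1 - e^{-x} = (1 - e^{-x})^{1-\alpha}(1 - e^{-x})^\alpha \le 1^{1-\alpha} \cdot x^\alpha = x^\alpha,
\]
using the two standard bounds $1 - e^{-x} \le 1$ and $1 - e^{-x} \le x$. Substituting $x = \delta^2/(2r)$ yields $1 - e^{-\delta^2/(2r)} \le (\delta^2/(2r))^\alpha$, so that for $\alpha \in [0, 1/2)$,
\[
J \le 2^{-\alpha} \delta^{2\alpha} \int_0^a r^{-1/2 - \alpha}\,dr = \frac{2^{-\alpha}}{1/2 - \alpha}\, \delta^{2\alpha} a^{1/2 - \alpha},
\]
giving the claim with $c_\alpha = 2^{-\alpha}/(1/2 - \alpha)$. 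The endpoint $\alpha = 1/2$, where this estimate degenerates, is handled separately by the substitution $u = \delta^2/(2r)$, which converts $J$ into $(\delta/\sqrt{2}) \int_{\delta^2/(2a)}^\infty (1 - e^{-u}) u^{-3/2}\,du$; since the integrand behaves like $u^{-1/2}$ near $0$ (because $1 - e^{-u} \sim u$) and like $u^{-3/2}$ near infinity, the full integral from $0$ to $\infty$ converges, and $J \le c_{1/2}\, \delta$ follows.

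There is no substantial obstacle here; the only mild delicacy is the endpoint $\alpha = 1/2$, where the interpolation approach yields a divergent constant and must be replaced by the change of variables above. The two pieces together cover all $\alpha \in [0, 1/2]$.
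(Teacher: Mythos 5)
Your argument is correct, and for the bound on $I$ it coincides with the paper's (monotonicity of $r\mapsto 1-e^{-\delta^2/(2r)}$ plus $\int_a^b r^{-1/2}dr=2(\sqrt b-\sqrt a)$). For $J$ you take a genuinely different and more elementary route: you insert the pointwise bound $1-e^{-x}\le x^\alpha$ directly under the integral and integrate $r^{-1/2-\alpha}$ explicitly, which settles every $\alpha\in[0,1/2)$ in one line, and you treat the endpoint $\alpha=1/2$ separately via the substitution $u=\delta^2/(2r)$, observing that $\int_0^\infty(1-e^{-u})u^{-3/2}du$ converges, so $J\le c_{1/2}\,\delta$. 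The paper instead integrates by parts to get $J=2\sqrt a\,(1-e^{-\delta^2/(2a)})+2\sqrt 2\,\delta\int_{\delta/\sqrt{2a}}^{\infty}e^{-x^2}dx$, then splits into the cases $\delta/\sqrt{2a}\ge 1$ and $\le 1$ using Gaussian-tail behavior, and only at the end applies $1-e^{-x}\le c_\alpha x^\alpha$ to reach the stated form. Your version avoids the integration by parts, the Gaussian tail estimate and the case analysis; its only cost is that the constant $2^{-\alpha}/(1/2-\alpha)$ blows up as $\alpha\to 1/2$, whereas the paper's intermediate bound $J\le 2\sqrt a\,(1-e^{-\delta^2/(2a)})+c\,\delta$ yields constants that stay bounded near the endpoint (and could be made uniform by interpolating your $\alpha=0$ and $\alpha=1/2$ bounds, $J\le(2\sqrt a)^{1-2\alpha}(c\delta)^{2\alpha}$). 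Since the lemma and its application in Theorem \ref{thm:spde} allow $c_\alpha$ to depend on $\alpha$, this makes no difference to what is needed, and your endpoint treatment closes the only gap your interpolation estimate leaves.
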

\begin{proof}  On the interval $a\le r\le b$, we have
$\displaystyle 1-e^{-\frac{\delta^2}{2b}}\le 1-e^{-\frac{\delta^2}{2r}}\le 1-e^{-\frac{\delta^2}{2a}}$.  The estimate for $I$ is then a straightforward  consequence. To estimate $J$, we first use integration by part to obtain
\begin{eqnarray*}
J& = &2\sqrt{r}\left(1-\ee r\right)_{r=0}^{r=a}+\int_{0}^{a}\left(\frac{d}{dr}\ee r\right)2\sqrt{r}dr\\
 & = & 2\sqrt{a}\left(1-\ee a\right)+\delta^2\int_{0}^{a}\ee r r^{-3/2}dr.
\end{eqnarray*}
By a change of variable $x=\frac{\delta}{\sqrt{2r}}$, we see that
\[J=2\sqrt{a}\left(1-\ee a\right)+2\sqrt{2}\delta\int_{\frac\delta{\sqrt{2a}}}^\infty e^{-x^2} dx.
\]
If $\frac\delta{\sqrt{2a}}\ge1$, since $\lim_{t\to\infty}\frac{\int_{t}^\infty e^{-x^2} dx}{(1-e^{-t^2})t^{-1}}=0$, $J$ is majorized by
\[J\le c\sqrt{a}\left(1-\ee a\right).\]
If $\frac\delta{\sqrt{2a}}\le1$, the integration $\int_{\frac\delta{\sqrt{2a}}}^\infty e^{-x^2} dx$ is bounded by $\sqrt\pi/2$, thus $J$ is majorized by
\[J\le 2\sqrt{a}\left(1-\ee a\right)+c\delta.\]Therefore, for any $0\le \alpha\le1/2$, employing the elementary inequality $1-e^{-x}\le c_\alpha x^\alpha$, we obtain
\[J\le c_\alpha\delta^{2\alpha}a^{1/2-\alpha}\]and the lemma follows.
\end{proof}

\begin{theorem}\label{thm:spde} For every $\alpha$ in $[0,1/4]$ and $\de>0$, there is a finite random variable $c_{\alpha,\de} $ depending on $\alpha$ and $\de$ such that the following estimate holds almost surely
\begin{equation}\label{ineq:ustxy} \sup_{|t-s|\le\delta,|x-y|\le\delta}\frac{|u(t,y)-u(t,x)-u(s,y)+u(s,x)|}{|t-s|^{\frac14-\alpha} |x-y|^{2\alpha}
{\left|\log\left( |t-s||x-y|\right)  \right|^{\frac12}  }}\le c_{\alpha,\de}\,.
\end{equation}
Moreover, for some absolute constant $\kappa_{\alpha}$, the following inequality holds
\begin{equation}
	 \lim_{\delta\downarrow 0}\sup_{|t-s|\le\delta,|x-y|\le\delta}\frac{|u(t,y)-u(t,x)-u(s,y)+u(s,x)|}{|t-s|^{\frac14-\alpha} |x-y|^{2\alpha}
{\left|\log\left( |t-s||x-y|\right)  \right|^{\frac12}  }}\le \kappa_{\alpha}\,.
\end{equation}
%\begin{equation}
%|u(t,y)-u(t,x)-u(s,y)+u(s,x)|\le C_\alpha  |t-s|^{\frac14-\alpha} |x-y|^{2\alpha}
%\sqrt{\left|\log\left( |t-s||x-y|\right)  \right|  }\,.
%\end{equation}
\end{theorem}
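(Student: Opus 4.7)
The plan is to apply Theorem \ref{thm:contW} directly to $u$, viewed as a centered Gaussian random field on $[0,T]\times\RR$ with $n=2$. By Proposition \ref{prop:secondmoment}, it suffices to exhibit a joint modulus of continuity $(p_1,p_2)$ for the variance $\EE|\square^2_{(s,x)}u(t,y)|^2$ that satisfies the Fernique-type integrability condition \eqref{cond:p}. I would take $p_1(u)=u^{1/4-\alpha}$ and $p_2(u)=u^{2\alpha}$; for these, \eqref{cond:p} is immediate since $\int_0^1|\log(1/u)|^{1/2}u^{\gamma-1}\,du<\infty$ whenever $\gamma>0$. With this choice, the function $h(x,y)$ appearing in Theorem \ref{thm:contW} becomes exactly $|t-s|^{1/4-\alpha}|x-y|^{2\alpha}|\log(|t-s||x-y|)|^{1/2}$, and the conclusions \eqref{eq:WLIL}--\eqref{ineq.WLIL2} translate directly into the two estimates of Theorem \ref{thm:spde}, with the exponential integrability of $c_{\alpha,\delta}$ inherited from that in Theorem \ref{thm:contW}.

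Everything thus reduces to the variance estimate
\begin{equation*}
\EE\bigl[|u(t,y)-u(t,x)-u(s,y)+u(s,x)|^2\bigr]\le c_\alpha|t-s|^{1/2-2\alpha}|x-y|^{4\alpha}.
\end{equation*}
Starting from $u(t,y)=\int_0^t\!\!\int_\RR p_{t-r}(y-z)\,W(dr,dz)$ and assuming $s<t$, I would apply the It\^o isometry to split the left-hand side as $A_1+A_2$, where $A_1$ is the $L^2((0,s)\times\RR)$-norm squared of the four-term rectangle difference $p_{t-r}(y-z)-p_{t-r}(x-z)-p_{s-r}(y-z)+p_{s-r}(x-z)$ and $A_2$ is the $L^2((s,t)\times\RR)$-norm squared of the spatial difference $p_{t-r}(y-z)-p_{t-r}(x-z)$. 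The term $A_2$ is straightforward: the Gaussian convolution identity $\int p_u(y-z)p_u(x-z)\,dz=p_{2u}(y-x)$ together with the substitution $u=t-r$ reduces it to
\[
A_2=\frac{1}{\sqrt\pi}\int_0^{t-s}\frac{1}{\sqrt u}\bigl(1-e^{-(y-x)^2/(4u)}\bigr)\,du,
\]
which is precisely $\pi^{-1/2}J$ of Lemma \ref{lem:iabexp} with $a=t-s$ and $\delta=|y-x|/\sqrt 2$; that lemma then yields $A_2\le c_\alpha|t-s|^{1/2-2\alpha}|x-y|^{4\alpha}$ for $\alpha\in[0,1/4]$.

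The main obstacle is controlling $A_1$, whose four-term kernel carries the delicate rectangle structure in the time variable as well. The cleanest route I see is Parseval in the spatial variable: the Fourier transform of $p_u(y-\cdot)-p_u(x-\cdot)$ equals $(e^{-i\xi y}-e^{-i\xi x})e^{-u\xi^2/2}$, and after carrying out the $r$-integration one obtains
\[
A_1=\frac{1}{\pi}\int_\RR (1-\cos(\xi(y-x)))\,\frac{(1-e^{-(t-s)\xi^2/2})^2(1-e^{-s\xi^2})}{\xi^2}\,d\xi,
\]
together with the analogous identity for $A_2$ in which the numerator is replaced by $1-e^{-(t-s)\xi^2}=(1-e^{-(t-s)\xi^2/2})(1+e^{-(t-s)\xi^2/2})$. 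Since $(1-e^{-(t-s)\xi^2/2})(1-e^{-s\xi^2})\le 1\le 1+e^{-(t-s)\xi^2/2}$ pointwise, I obtain $A_1\le A_2$, which yields the desired variance bound with $c_\alpha$ only doubled. The endpoint values $\alpha\in\{0,1/4\}$, where one of $p_1,p_2$ fails to vanish at the origin, can be recovered by a slight modification of the modulus (for instance inserting a vanishing logarithmic factor) that is still compatible with \eqref{cond:p}.
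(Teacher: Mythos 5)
Your proposal is correct, and it shares the paper's overall skeleton: everything is reduced to the second-moment bound $\EE\bigl|\square_{(s,x)}^{2}u(t,y)\bigr|^{2}\le c_\alpha|t-s|^{1/2-2\alpha}|x-y|^{4\alpha}$, which is then fed into Theorem \ref{thm:contW} with $p_1(u)=u^{1/4-\alpha}$, $p_2(u)=u^{2\alpha}$, exactly as the paper does. Where you genuinely differ is in how that variance bound is obtained. The paper computes the variance of the rectangular increment exactly in physical space: using $\int_\RR p_a(z-x)p_b(z-y)\,dz=p_{a+b}(x-y)$ it writes $\EE|\square^2 u|^2$ as $\bigl(\int_{s+t}^{2(s\vee t)}-\int_{2(s\wedge t)}^{s+t}+2\int_0^{|s-t|}\bigr)[p_r(0)-p_r(x-y)]\,dr$, discards the first two integrals by the sign argument coming from the $I$-estimate of Lemma \ref{lem:iabexp}, and bounds the remaining piece by the $J$-estimate. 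You instead split the It\^o integral at time $s$, so the variance is $A_1+A_2$ with independent contributions; $A_2$ is evaluated by the same convolution identity and is literally a $J$-integral, while $A_1$ is disposed of by Plancherel in the space variable, where the factorized Fourier kernels give the pointwise comparison $(1-a)^2(1-b)\le(1-a)(1+a)$ and hence $A_1\le A_2$. I checked your Fourier identities and the comparison; they are correct, and your final bound is the paper's expression with the upper limit $|s-t|$ replaced by $2|s-t|$, so the same application of Lemma \ref{lem:iabexp} finishes. Your route buys a frequency-by-frequency explanation of why the rectangle term over $[0,s]$ is dominated by the one-sided term over $[s,t]$ and avoids the exact cancellation bookkeeping; the paper's route buys a closed-form identity for the variance that it reuses in its later remarks. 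The one soft spot is the endpoints $\alpha\in\{0,1/4\}$, where one of your $p_k$ is constant and violates $p_k(0)=0$: your suggested fix of inserting a vanishing logarithmic factor goes the wrong way, since shrinking the modulus strengthens the variance inequality you must verify and the bound $c_\alpha|t-s|^{1/2-2\alpha}|x-y|^{4\alpha}$ offers no extra decay to pay for it; but the paper's own proof silently has the same degeneracy at the endpoints, so this is a shared, minor gap rather than a defect of your argument relative to theirs.
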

\begin{proof} $u(t, y)$ is a mean zero Gaussian field.
The covariance of $u(t,y)$ and $u(s, x)$ is given by
\begin{eqnarray*}
\EE[u(s,x)u(t,y)] & = & \int_{\RR^{2}}\chi_{[0,s]}(r)\chi_{[0,t]}(r)p_{s-r}(x-z)p_{t-r}(y-z)drdz\\
 & = & \int_{\RR^{2}}f(s,x)f(t,y)drdz\,,
\end{eqnarray*}
where $f(s,x)=\chi_{[0,s]}(r)p_{s-r}(x-z)$.

We calculate the second moment of $\square_{(s,x)}^{2}u(t,y)$ as
follows
\begin{eqnarray*}
\EE\left[\square_{(s,x)}^{2}u(t,y)\right]^{2} & = & \EE\square_{(s,x)}^{2}u(t,y)\square_{(s,x)}^{2}u(t,y)\\
 & = & \EE\square_{(s,x,s,x)}^{4}u(t,y)u(t,y)\\
 & = & \square_{(s,x,s,x)}^{4}\EE\left[u(t,y)u(t,y)\right]\\
 & = & \square_{(s,x,s,x)}^{4}\int_{\RR^{2}}f(t,y)^{2}drdz\\
 & = & \int_{\RR^{2}}\square_{(s,x,s,x)}^{4}\left[f(t,y)f(t,y)\right]drdz\\
 & = & \int_{\RR^{2}}\left[\square_{(s,x)}^{2}f(t,y)\right]\left[\square_{(s,x)}^{2}f(t,y)\right]drdz\\
 & = & \int_{\RR^{2}}\left[\square_{(s,x)}^{2}f(t,y)\right]^{2}drdz\,,
\end{eqnarray*}
where
\begin{eqnarray*}
\left[\square_{(s,x)}^{2}f(t,y)\right]^{2} & = & \left[f(s,x)-f(t,x)-f(s,y)+f(t,y)\right]^{2}\\
 & = & f(s,x)^{2}+f(t,x)^{2}+f(s,y)^{2}+f(t,y)^{2}\\
 &  & -2f(s,x)f(t,x)-2f(s,x)f(s,y)+2f(s,x)f(t,y)\\
 &  & +2f(t,x)f(s,y)-2f(t,x)f(t,y)-2f(s,y)f(t,y).
\end{eqnarray*}
Taking the integration with respect to $z$ and  using the following identity
\[
\int_{\RR}p_{a}(z-x)p_{b}(z-y)dz=p_{a+b}(x-y)
\]
we obtain
\begin{eqnarray*}
& &\EE\left[\square_{(s,x)}^{2}u(t,y)\right]^{2}\\ & = & \int_{\RR}\left[2\chi_{[0,s]}(r)p_{2s-2r}(0)+2\chi_{[0,t]}(r)p_{2t-2r}(0)\right]dr\\
 &  & +\int_{\RR}\left[-2\chi_{[0,s\wedge t]}p_{s+t-2r}(0)-2\chi_{[0,s]}p_{2s-2r}(x-y)+2\chi_{[0,s\wedge t]}p_{s+t-2r}(x-y)\right]dr\\
 &  & +\int_{\RR}\left[2\chi_{[0,s\wedge t]}p_{s+t-2r}(x-y)-2\chi_{[0,t]}p_{2t-2r}(x-y)-2\chi_{[0,s\wedge t]}p_{s+t-2r}(0)\right]dr\\
 & = & 2\int_{0}^{s}\left[p_{2s-2r}(0)-p_{2s-2r}(x-y)\right]dr+2\int_{0}^{t}\left[p_{2t-2r}(0)-p_{2t-2r}(x-y)\right]dr\\
 &  & -4\int_{0}^{s\wedge t}\left[p_{s+t-2r}(0)-p_{s+t-2r}(x-y)\right]dr.
\end{eqnarray*}
By change of variables $u=2s-2r$, $v=2t-2r$ and $w=s+t-2r$ in the
above corresponding integrals respectively and noticing  that $s+t-2(s\wedge t)=|t-s|$,
we get
\begin{eqnarray*}
\EE\left[\square_{(s,x)}^{2}u(t,y)\right]^{2} & = & \int_{0}^{2s}\left[p_{u}(0)-p_{u}(x-y)\right]du+\int_{0}^{2t}\left[p_{v}(0)-p_{v}(x-y)\right]dv\\
 &  & -2\int_{|s-t|}^{s+t}\left[p_{w}(0)-p_{w}(x-y)\right]dw\\
 & = & \left(\int_{s+t}^{2(s\vee t)}-\int_{2(s\wedge t)}^{s+t}+2\int_{0}^{|s-t|}\right)\left[p_{r}(0)-p_{r}(x-y)\right]dr.
\end{eqnarray*}
By Lemma \ref{lem:iabexp}, we see that
\begin{multline*}
\left(\int_{s+t}^{2(s\vee t)}-\int_{2(s\wedge t)}^{s+t}\right)\left[p_{r}(0)-p_{r}(x-y)\right]dr\\
\le\frac{1}{\sqrt{2\pi}}\left(1-e^{-\frac{(x-y)^{2}}{2(s+t)}}\right)\left(\sqrt{2s}+\sqrt{2t}-2\sqrt{s+t}\right)\le0.
\end{multline*}
and
\[\int_{0}^{|s-t|}\left[p_{r}(0)-p_{r}(x-y)\right]dr\le c_\alpha|x-y|^{2\alpha}|s-t|^{1/2-\alpha}
\]
for every $\alpha$ in $[0,1/2]$.
Thus
\begin{eqnarray}
\EE\left[\square_{(s,x)}^{2}u(t,y)\right]^{2}
&\le&2 \int_{0}^{|s-t|}\left[p_{r}(0)-p_{r}(x-y)\right]dr\label{e.touse}\\
&\le&  c_\alpha|x-y|^{2\alpha}|s-t|^{1/2-\alpha}.\nonumber
\end{eqnarray}
An application of Theorem \ref{thm:contW} immediately gives the desired result.
\end{proof}

\begin{rem}
Using the method in Remark \ref{rem:1}, the above result implies there is a constant $c$ such that
\begin{equation}\label{ineq:uLIL2}
	\lim_{\delta\downarrow0}\sup_{\substack{|t|\le1,|s|\le1\\|x|\le1,|y|\le1\\|t-s|\le \delta,|x-y|\le \delta}}\frac {|u(s,x)-u(t,y)|}{%|s-t|^{\frac14}\sqrt{\log\frac{1}{|s-t|}}+
	|s-t|^{\frac14}\sqrt{\log\frac{1}{|x||s-t|}}+|x-y|^{\frac12}
\sqrt{\log\frac{1}{|x-y||t|}}}\le c
\end{equation}
%\begin{equation}
%|u(s,x)-u(t,y)|\le C\left(|s-t|^{1/4}\sqrt{\log\frac{1}{|x||s-t|}}+|x-y|^{1/2}\sqrt{\log\frac{1}{|x-y||t|}}\right)
%\end{equation}
which is sharper than \eref{ineq:uholder}. %The extra term $|s-t|^{\frac14}\sqrt{\log\frac{1}{|s-t|}}$ in \eqref{ineq:uLIL2} is the modulus of continuity of $u(\cdot,0)$. This term appears because $u(t,0)$ does not vanish.
\end{rem}
\begin{rem} After the completion of this paper  it is communicated to us that recently,   M. Meerschaert, W. Wang and Y. Xiao obtained  (see  \cite{MWX13},
Theorem 4.1 and see also Theorem 6.1 for fractional multiparameter Brownian motion)
the following result.   Let  $W$ be an  $n$-parameter  Gaussian process with
mean zero  and
\[
\rho^2(x,y)=\EE\left[ |W(x)-W(y)|^2\right]\,.
\]
Assume there are positive constants  $H_1,
\cdots\,,  H_n\in (0,1]$ and  positive constants $C_1<C_2$ such that
\begin{equation}\label{aniso}
	C_1\left(\sum_{j=1}^n |x_j-y_j|^{H_j}\right)^2\le \rho^2(x,y)\le C_2\left(\sum_{j=1}^n |x_j-y_j|^{H_j}\right)^2\,.	
\end{equation}
Assume further that $W$ satisfies some conditions  that we don't repeat here and refer interested readers to \cite{MWX13}. Let $I=[a,1]^n$ where $a\in(0,1)$ is a constant. Then
	\begin{equation}
	\lim_{\delta\downarrow0}\sup_{\substack{x,y\in I\\|x-y|\le\delta}}\frac{|W (x)-W (y)|}{\be (x,y)}=\kappa    \label{mwx}
	\end{equation}
	for some positive   constant $\kappa$,  where
\[
\be(x,y)=\rho(x,y)\sqrt{\log (1+\rho(x,y)^{-1})} \,.
\]
It is obvious that as $|x-y|\rightarrow 0$, we have
\[
\be(x,y)\approx \rho(x,y)\sqrt{|\log (|x-y|)|} \,.
\]
Moreover, given \eqref{aniso}, $\beta(x,y)$ has the same order as $\sigma^H(x,y)$ in \eqref{sigmaH} when $x,y$ are bounded and $|x-y|\to0$.
%Now we compare this result \eref{mwx} with our inequality \eqref{eq:WLIL2}.  
%First, we note  that we can take $p^2(x,y)=\EE |W(x)-W(y)|^2=\rho(x,y)$ (see \eref{pdefn}).   
%Since the function  $$\left(\prod_{j\neq k}p_j(|x_j|)\right) \left|\log \prod_{j\neq k}|x_j|\right|^{\frac12} $$ is bounded in a bounded domain, we see from \eref{e.sigma} that 
%\[
%\si(x,y)\le \sum_{k=1}^n  p_k(|x_k-y_k|) |\log|x_k-y_k||^{\frac12}\,.
%\]
Thus the  identity  \eref{mwx} says that our inequality    \eqref{eq:WLIL2} is sharp. Besides, \eqref{eq:WLIL2} does not require $x,y$ to be bounded away from $0$.
%up to a bounded function of $x$.
% {\color{red} I'm not sure the meaning of the phrase "bounded function of x"}
We  conjecture that the inequality \eqref{ineq:ustxy}   is also sharp.  Moreover,
it is interesting to know if an analogous identity to \eref{mwx} holds for the increments over the rectangles
of type  \eqref{ineq:ustxy}  or not. Namely, for any $\al\in [0, 1/4]$,  is  there  a positive constant  $\kappa_\al $ such that
\begin{equation}\label{conjecture}
\lim_{\de\downarrow 0} \sup_{|t-s|\le\delta,|x-y|\le\delta}\frac{|u(t,y)-u(t,x)-u(s,y)+u(s,x)|}{|t-s|^{\frac14-\alpha} |x-y|^{2\alpha}
{\left|\log\left( |t-s||x-y|\right)  \right|^{\frac12}  }}= \kappa_{\alpha }\,?
\end{equation}
As it is well-known the Garsia-Rodemich-Rumsey inequality gives only
the upper bound. It has not been powerful to obtain  the lower bound.
Therefore,  one has to  attack the above problem \eref{conjecture}  using other means. As a confirmative example, we remark that in the case of Brownian sheet on $\RR^2$, G. J. Zimmerman showed in \cite{Zimmerman} that
\begin{equation}
	\limsup_{\substack{|x_1-y_1|=\delta_1\downarrow 0\\|x_2-y_2|=\delta_2\downarrow0} }\frac{|\square^2_yW(x)|}{[2 \delta_1 \delta_2\log(1/(\delta_1 \delta_2))]^\frac12}=1\,.
\end{equation}
\end{rem}

\begin{rem} If one prefers to write one inequality rather than arbitrary $\al$ in
Theorem \ref{thm:spde}, one can write the
  inequality \eref{e.touse}   as
\begin{eqnarray*}
\EE\left[\square_{(s,x)}^{2}u(t,y)\right]^{2}
&\le&2 \int_{0}^{|s-t|} \frac{1}{\sqrt {2\pi r}} \left[1-e^{-\frac{|y-x|^2}{2r}}\right]dr \\
&\le&\sqrt {\frac2\pi} |x-y|\int_0^{\frac{|t-s|}{|x-y|^2}} r^{-1/2} (1-e^{-\frac1r}) dr\\
&=& |x-y| \rho\left(\frac{|t-s|}{|x-y|^2}\right)\,,
\end{eqnarray*}
where $\displaystyle
\rho(u)=\sqrt {\frac2\pi}  \int_0^u r^{-1/2} (1-e^{-\frac1r}) dr$ which is of the order $\sqrt u$ as $u\rightarrow 0$ and bounded as $u\rightarrow \infty$.  It is obvious that
$|x-y| \rho\left(\frac{|t-s|}{|x-y|^2}\right)$ goes to $0$ if one of $|t-s|$ and
$|x-y|$ goes to $0$.
\end{rem}

\noindent{\bf Acknowledgment}: The authors thank the referee for careful reading of the paper
and for some constructive comments.

%\newpage

\end{document}